\theoremstyle{definition}
\newtheorem{theorem}{Theorem}[section]
\newtheorem{proposition}[theorem]{Proposition}
\newtheorem{lemma}[theorem]{Lemma}
\newtheorem{corollary}[theorem]{Corollary}
\theoremstyle{definition}
\theoremstyle{remark}
\newtheorem{remark}[theorem]{Remark}
\newcommand{\norm}[1]{\left\lVert#1\right\rVert}
\numberwithin{equation}{section}
\begin{document}

\title[Spectrum of Band Matrices Generated by Oscillatory Sequences]{Spectrum and Fine Spectrum of Band Matrices Generated by Oscillatory Sequences}

\author{Jyoti Rani}
\address{Department of mathematics, Indian Institute of Technology Bhilai, Durg - 491001, India.}
\curraddr{}
\email{jyotir@iitbhilai.ac.in}

\author{Arnab Patra}
\address{Department of mathematics, Indian Institute of Technology Bhilai, Durg - 491001, India.}
\curraddr{}
\email{arnabp@iitbhilai.ac.in}

\author{P D Srivastava}
\address{Department of mathematics, Indian Institute of Technology Bhilai, Durg - 491001, India.}
\curraddr{}
\email{pds@iitbhilai.ac.in}

\subjclass[2020]{47A10, 47B37}

\keywords{Band matrices; Spectrum of an operator; Sequence spaces}

\begin{abstract} 
In this paper, a new class of band matrices is considered where the entries of each non-zero band form a sequence with two limit points. The compact perturbation technique is used to study the spectrum over the $\ell_{p}, (1<p<\infty)$ sequence space. Several spectral subdivisions such as fine spectrum, discrete spectrum, essential spectrum, etc. are obtained. In addition, a few sufficient conditions on the absence of point spectrum over the essential spectrum are also discussed.
\end{abstract}
\maketitle

\section{\textbf{Introduction}} 
The spectral analysis of infinite matrices, in particular band matrices, defined over sequence spaces has been treated by many researchers worldwide. Over the last few decades, localization of spectrum and various spectral sub-divisions of band matrices over sequences spaces generated by difference equations has evolved into a substantial area of study in the field of spectral theory.

The spectral properties of the double band operator $B(r,s)$ defined by the difference equation with constant coefficient
\[(B(r,s)x)_n = sx_{n-1} + rx_n, \ n \in \mathbb{N}\]
with $x_0=0$ were studied by Altay and Ba{\c{s}}ar \cite{c0c}, Furkan et al. \cite{l1bv}, Bilgi{\c{c}} and Furkan \cite{lpbvp} over the sequence spaces $c_0$ and $c$, $l_1$ and $bv$, $l_p$ and $bv_p$ $(1<p< \infty)$ respectively. The case $r=1=s$ was explored by Altay and Ba{\c{s}}ar \cite{deltac0c}, Kayaduman and Furkan \cite{deltal1bv}, Akhmedov and Ba{\c{s}}ar \cite{deltabvp} over the sequence spaces $c_0,$ $c$ and $l_1,$ $bv$ and $bv_p (1 \leq p < \infty)$ respectively. $B(r,s)$ was further generalised to the triple band operator $B(r,s,t)$ and  Furkan et al. \cite{c0c3,lpbvp3}, Bilgi{\c{c}} and Furkan \cite{l1bv3} examined the spectrum and fine spectrum of $B(r,s,t)$. The fine spectrum of the generalised $n$-band triangular Toeplitz operator was studied by Altun \cite{altun}, Birbonshi and Srivastava \cite{bir}. Band matrices with non-constant band are also been considered in the literature. The spectrum and fine spectrum of the generalised differnece operators $\vartriangle_v$ and $\vartriangle_{uv}$ were obtained in \cite{akhmedov2012some,v_l1} and \cite{ab_c,ab_lp,abc0,el2018spectra,uv_l1} respectively where 
\[(\vartriangle_v x)_n = v_n x_n - v_{n-1}x_{n-1},\]
\[(\vartriangle_{uv} x)_n = u_n x_n - v_{n-1} x_{n-1}\]
with $x_{-1} = 0,$ $n \in \mathbb{N}$ and certain assumptions on the sequences $u$ and $v$. Later on, El-Shabrawy \cite{ab_lp,abc0} has studied the fine spectrum of the operator $\vartriangle_{ab}$ on the sequence space $l_p$ $(1< p< \infty)$ and $c_0$ respectively. In 2012, the fine spectrum of lower triangular triple band matrix $\Delta^2_{uvw}$ on $l_1$ is studied by Panigrahi and Srivastava \cite{pds1} and analogously, the upper triangular case is studied by Altundag and Abay \cite{filomat1}.
An important class of tridiagonal matrices, known as Jacobi matrices, has been studied by several researchers. In this context, we refer the readers to the recent works done by J. Dombrowski \cite{dombrowski2011commutator, dombrowski2017some,dombrowski2002absolute}.
 The generalised $m+1$ banded matrix $\Delta^m,$ $m \in \mathbb{N}$ is considered in \cite{durna2023spectral}. The spectrum and fine spectrum of the difference operator $\vartriangle^r_v$ over the sequence spaces $c_0$ and $l_1$ have been studied by Dutta and Baliarsingh (see \cite{rv_c0, rv_l1}). Recently Meng and Mei \cite{meng2019matrix,meng2020spectrum} characterise the spectrum of the generalised difference operator $B_v^{(m)}.$ Spectra of tridiagonal matrices \cite{bilgic2019fine,el2020spectra,karakaya2012fine} and symmetric $2n+1$ band matrices \cite{altun2012fine} are also studied.
For a detailed review, one may refer to the survey articles \cite{baliarsingh2021survey,yecsilkayagil2019survey} and the references therein.

An interesting problem is to study the spectrum and fine spectrum of banded matrices acting over sequence spaces where the entries of the band forms oscillatory sequences. Recent articles \cite{das2017spectrum,mahto2021spectra,patra2020spectrum} focus in this direction. However, the spectral properties of symmetric band matrices whose bands are generated by oscillatory sequences are not much explored. In this article, we attempt to study the spectral properties of a class of penta-diagonal band matrices defined over the sequence space $\ell_p (1 < p < \infty)$ where the entries in the non-zero bands form sequences with two limit points. The case where the entries in the non-zero bands form oscillatory sequences is treated separately.

Let $\ell_{p}$ represents the Banach space of  $p$-absolutely summable sequences of real or complex numbers with the norm 
\[ \norm{x}_{p}=\left( \sum_{n=1}^{\infty}|x_n|^p\right)^{\frac{1}{p}}.\] 
Also let $\mathcal{D}_p$ denotes the set of all diagonal operators on $\ell_p.$ For any operator $T \in \mathcal{D}_p,$ $\mbox{diag}(T)$ represents the sequence in the diagonal of $T.$ In this work we investigate the spectral properties of a class of operators $T$ defined over $\ell_p$ represented by the following form:
\[T = S_r^2D_1 + D_2S_{\ell}^2 + D_3\]
where $S_r, S_{\ell} : \ell_p \to \ell_p$, denotes the right shift operator, left shift operator respectively and $D_1, D_2, D_3 \in \mathcal{D}_p$ with $\mbox{diag}(D_1) = \{c_n\} \subset \mathbb{C} \setminus \{0\}$ $\mbox{diag}(D_2) = \{b_n\} \subset \mathbb{C} \setminus \{0\}$ and $\mbox{diag}(D_3)=\{a_n\} \subset \mathbb{C}$. We further assume that the subsequences $\{a_{2n-1}\},$ $\{b_{2n-1}\}$, $\{c_{2n-1}\}$ converges to the non-negative real numbers $r_1,$ $s_1,$ $s_1$ respectively and $\{a_{2n}\},$ $\{b_{2n}\}$, $\{c_{2n}\}$ converges to the non-negative real numbers $r_2,$ $s_2,$ $s_2$ respectively where $s_1 \neq 0$ and $s_2 \neq 0$.  

Our focus is to investigate the spectral properties of the operator $T$ using compact perturbation technique. Let us consider another operator $T_0$ over $\ell_p$ defined by
\[T_0 = S_r^2D_1^\prime + D_2^\prime S_{\ell}^2 + D_3^\prime\] 
where $D_1^\prime, D_2^\prime, D_3^\prime \in \mathcal{D}_p$ with 
\[\mbox{diag}(D_1^\prime) =\mbox{diag}(D_2^\prime) =  \{s_1, s_2, s_1, s_2, \cdots\}, \ \mbox{diag}(D_3^\prime) = \{r_1, r_2, r_1, r_2, \cdots\}.\]
Using the properties of compact operators, it can be proved that $T - T_0$ is a compact operator over $\ell_{p}$. Both the operators $T$ and $T_0$ can be represented by the following penta-diagonal matrices
\[T=\begin{pmatrix}
	a_1 &0& b_1 &0&0&\cdots &  \\
	0 & a_2&0 & b_2& 0&\cdots&\\
	c_1&0&a_3&0&b_3&\cdots&\\
	0&c_2&0&a_4&0&\cdots\\
	0&0&c_3&0&a_5&\cdots\\
	\vdots  &\vdots & \vdots  & \vdots &\vdots&\ddots
	
	\end{pmatrix},
	T_0=
		\begin{pmatrix}
			r_1 & 0 &s_1&0&0&\cdots &  \\
			0 & r_2 & 0& s_2&0& \cdots&\\
			s_1&0&r_1&0&s_1&\cdots&\\
			0&s_2&0&r_2&0&\cdots&\\
			0&0&s_1&0&r_1&\cdots\\
			\vdots  & \vdots  & \vdots &\vdots&\vdots& \ddots
		\end{pmatrix}.
	\]
 
  We obtain the spectrum, fine spectrum and the sets of various spectral subdivisions of the operator $T_0.$ It is interesting to note that the spectrum of $T_0$ is given by \[[r_1-2s_1,r_1+2s_1]\cup[r_2-2s_2,r_2+2s_2],\] which is also its essential spectrum with no eigenvalues. Later we investigate how the spectrums of $T$ and $T_0$ are related. Few results on the essential spectrum of $T$ (which is identical to the essential spectrum of $T_0$) being devoid of its eigenvalues are also derived. This helps us to characterize the point spectrum of $T.$ Theory of difference equations plays an important role in our study. We use various results on the asymptotic behaviour of solutions of difference equations to demonstrate the findings of our paper. For more details on difference equations one can refer \cite{difference}.

The remainder of paper is organized as follows: section 2 is devoted to introduce some terminologies and results which are relevant to our work. Section 3 contains the results on the spectrum and fine spectrum of $T_0$ over $\ell_p.$ The spectral properties of $T$ are discussed in section 4.

 \section{\bf Preliminaries}
  Let $X$ and $Y$ are Banach spaces and for any operator $A:X \to Y$, $N(A)$ and $R(A)$ denote the null space and range space of $A$ respectively. The operator $A^*:Y^* \to X^*$ is called the adjoint operator and defined by
 \begin{equation*}
 	(A^{*}f)(x)=f(Ax) \quad \text{for all}~ f \in Y^{*} \text{and}~ x\in X
 \end{equation*}
 where $X^*$, $Y^*$ are the dual spaces of $X$ and $Y$ respectively. $B(X)$ denotes the set of all bounded linear operators from $X$ to itself. For any $A\in B(X)$, the resolvent set $ \rho(A, X)$ of $A$ is the set of all $\lambda $ in the complex plane such that $(A-\lambda I)$ has a bounded inverse in $X$ where $I$ is the identity operator defined over $X$. The complement of resolvent set in the complex plane $\mathbb{C} $ is called the spectrum of $A$ and it is denoted by $ \sigma(A, X)$. The spectrum $\sigma(A,X)$ can be partitioned into three disjoint sets which are 
 \begin{enumerate}
 	\item[(i)] the point spectrum, denoted by $\sigma_p(A, X)$, is the set of all such $\lambda \in \mathbb{C}$ for which $(A-\lambda I)^{-1}$ does not exist. An element $\lambda\in\sigma_p(A , X)$ is called an eigenvalue of $A$, 
 	\item[(ii)] the continuous spectrum, denoted by $\sigma_c(A , X)$, is the set of all such $\lambda \in\mathbb{C}$ for which $(A-\lambda I)^{-1}$ is exists, unbounded and $R(A-\lambda I)$ is dense in $X$ but $R(A-\lambda I)\neq X$,
 	\item[(iii)] the residual spectrum, denoted by $\sigma_r(A, X)$, is the set of all such $\lambda \in\mathbb{C}$ for which $(A-\lambda I)^{-1}$ exists (and may be bounded or not) but $R(A-\lambda I)$ is not dense in $X$.
 \end{enumerate} 
These three disjoint sets are together known as fine spectrum and their union becomes the whole spectrum. There are some other important subdivisions of the spectrum such as approximate point spectrum $\sigma_{app}(A, X)$, defect spectrum $\sigma_{\delta}(A, X)$ and compression spectrum $\sigma_{co}(A, X)$, defined by

\begin{eqnarray*}
	\sigma_{app}(A, X)&=&\{\lambda \in \mathbb{C}:~~ \text{there exists a Weyl sequence for}~~(A - \lambda I)\},\\
	\sigma_{\delta}(A,X)&=&\left\lbrace \lambda \in \mathbb{C}: (A-\lambda I ) \text{ is not surjective} \right\rbrace,\\
	\sigma_{co}(A,X)&=&\left\lbrace  \lambda \in \mathbb{C}:\overline{R(A-\lambda I)} \neq X\right\rbrace.
\end{eqnarray*}
 The sets which are defined above also forms subdivisions of spectrum of $A$ (which are not necessarily disjoint) as follows \cite[p.178]{basar2012summability}
 \begin{eqnarray*}
 	\sigma(A,X)&=&\sigma_{app}(A,X) \cup \sigma_{co}(A,X),\\
 	\sigma(A,X)&=&\sigma_{app}(A,X) \cup \sigma_{\delta}(A,X).
 \end{eqnarray*}
   An operator $A \in B(X)$ is said to be Fredholm operator if $R(A)$ is closed and  $\dim(N(A))$, $\dim(X / R(A))$ are finite. In this case the number
  \[ \dim(N(A))- \dim(X / R(A)) \]
  is called the index of the Fredholm operator $A$. The essential spectrum of $A$ is defined by the set \[ \sigma_{ess}(A,\ell_{p})=\left\lbrace \lambda \in \mathbb{C}:(A-\lambda I)~ \text{is not a Fredholm operator}\right\rbrace.\]
   If $A$ is a Fredholm operator and $K \in B(X)$ is a compact operator then $A+K$ is also a Fredholm operator with same indices. Since compact perturbation does not effect the Fredholmness and index of a Fredholm operator, we have \[\sigma_{ess}(A, X)=\sigma_{ess}(A+K, X).\]

For any isolated eigenvalue $\lambda$ of $A,$ the operator $P_A$ which is defined by
\[P_A(\lambda) = \frac{1}{2 \pi i} \int_\gamma  (\mu I - A)^{-1} d \mu,\]   
is called the Riesz projection of $A$ with respect to $\lambda$ where $\gamma$ is positively orientated circle centred at $\lambda$ with sufficiently small radius such that it excludes other spectral values of $A.$ An eigenvalue $\lambda$ of $A$ is said to be a discrete eigenvalue if it is isolated and the rank of the associated Riesz projection is finite. The rank of the Riesz projection is called the algebraic multiplicity of $\lambda$. The set of all such eigenvalues with finite multiplicities is called the discrete spectrum of $A$ and it is denoted by $\sigma_d(A,X).$ This type of eigenvalues sometimes referred as eigenvalues with finite type.

In the following proposition, we mention some inclusion relation of spectrum of a bounded linear operator and its adjoint operator.
\begin{proposition} \label{p3.1} \cite[p.195]{basar2012summability}
	If $X$ is a Banach space and $A\in B(X)$, $A^*\in B(X^*)$ then the spectrum and subspectrum of $A$ and $A^*$ are related by the following relations: 
	\begin{itemize}
	\item[(a)] $\sigma(A^*,X^*)=\sigma(A,X).$
	\item[(b)] $\sigma_c(A^*,X^*)\subseteq\sigma_{app}(A,X).$
	\item[(c)] $\sigma_{app}(A^*,X^*)=\sigma_{\delta}(A,X).$
	\item[(d)] $\sigma_{\delta}(A^*,X^*)=\sigma_{app}(A,X).$
	\item[(e)] $\sigma_p(A^*,X^*)=\sigma_{co}(A,X).$
	\item[(f)] $\sigma_{co}(A^*,X^*)\supseteq\sigma_{p}(A,X).$
	\item[(g)] $ \sigma(A,X)=\sigma_{app}(A,X)\cup\sigma_{p}(A^*,X^*)$=$\sigma_p(A,X)\cup\sigma_{app}(A^*,X^*).$
	\end{itemize}
\end{proposition}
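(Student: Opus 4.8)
The statement is a classical compendium of duality relations; the plan is to derive each item from the standard correspondence between a bounded operator and its Banach-space adjoint, applied to $S := T - \lambda I$ (so that $S^{*} = T^{*} - \lambda I$) for each fixed $\lambda \in \mathbb{C}$. The facts I would take as known are the annihilator identities $N(S^{*}) = R(S)^{\perp}$ and $N(S) = {}^{\perp}R(S^{*})$, the closed range theorem (so $R(S)$ is closed iff $R(S^{*})$ is closed, and then $R(S) = {}^{\perp}N(S^{*})$ and $R(S^{*}) = N(S)^{\perp}$), together with their three immediate consequences: $S$ is invertible iff $S^{*}$ is invertible; $S$ is bounded below iff $S^{*}$ is surjective; and $S$ is surjective iff $S^{*}$ is bounded below.

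Granting these, the argument is pure bookkeeping. Item (a) follows from invertibility duality, which gives $\rho(T,X) = \rho(T^{*},X^{*})$ and hence $\sigma(T^{*},X^{*}) = \sigma(T,X)$. For (d): $\lambda \notin \sigma_{\delta}(T^{*},X^{*})$ says $S^{*}$ is surjective, equivalently $S$ is bounded below, i.e. $\lambda \notin \sigma_{app}(T,X)$; symmetrically $\lambda \notin \sigma_{app}(T^{*},X^{*})$ says $S^{*}$ is bounded below, equivalently $S$ is surjective, i.e. $\lambda \notin \sigma_{\delta}(T,X)$, which is (c). For (e): $\lambda \in \sigma_{p}(T^{*},X^{*})$ iff $N(S^{*}) \neq \{0\}$ iff $R(S)^{\perp} \neq \{0\}$ iff $\overline{R(S)} \neq X$ (the last step by Hahn--Banach), i.e. iff $\lambda \in \sigma_{co}(T,X)$. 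Item (b) is then immediate: a point of $\sigma_{c}(T^{*},X^{*})$ is in particular one where $S^{*}$ fails to be surjective, so $\sigma_{c}(T^{*},X^{*}) \subseteq \sigma_{\delta}(T^{*},X^{*}) = \sigma_{app}(T,X)$ by (d). For (f): if $\lambda \in \sigma_{p}(T,X)$ then $N(S) \neq \{0\}$, hence ${}^{\perp}R(S^{*}) \neq \{0\}$; if $R(S^{*})$ were norm-dense in $X^{*}$ this pre-annihilator would vanish, a contradiction, so $\overline{R(S^{*})} \neq X^{*}$, i.e. $\lambda \in \sigma_{co}(T^{*},X^{*})$. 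Only an inclusion is obtained here because injectivity of $S$ forces $R(S^{*})$ to be weak${}^{*}$-dense, not necessarily norm-dense.

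For (g), the excerpt already records $\sigma(T,X) = \sigma_{app}(T,X) \cup \sigma_{co}(T,X)$ and $\sigma(T,X) = \sigma_{app}(T,X) \cup \sigma_{\delta}(T,X)$; substituting (e) into the first yields $\sigma(T,X) = \sigma_{app}(T,X) \cup \sigma_{p}(T^{*},X^{*})$. For the remaining identity I would pass to $T^{*}$: by (a), $\sigma(T,X) = \sigma(T^{*},X^{*}) = \sigma_{app}(T^{*},X^{*}) \cup \sigma_{co}(T^{*},X^{*})$, so it suffices to show $\sigma_{co}(T^{*},X^{*}) \setminus \sigma_{app}(T^{*},X^{*}) \subseteq \sigma_{p}(T,X)$. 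If $\lambda$ lies in this difference, then $S^{*}$ is bounded below, so $R(S^{*})$ is closed and proper; by the closed range theorem $R(S^{*}) = N(S)^{\perp}$, and a proper closed subspace can coincide with $N(S)^{\perp}$ only if $N(S) \neq \{0\}$, i.e. $\lambda \in \sigma_{p}(T,X)$. Hence $\sigma(T,X) \subseteq \sigma_{p}(T,X) \cup \sigma_{app}(T^{*},X^{*}) \subseteq \sigma(T,X)$, giving equality. The only place that needs real care is this last step — and, dually, the strict inclusion in (f): the closed range theorem is precisely what allows one to upgrade ``$\overline{R(S^{*})} \neq X^{*}$'' to ``$N(S) \neq \{0\}$'' when $R(S^{*})$ is known to be closed, whereas without that hypothesis one obtains only weak${}^{*}$-density of $R(S^{*})$. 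Everything else is a direct translation through the duality facts listed above.
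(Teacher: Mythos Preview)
Your argument is correct and follows the standard duality route via the annihilator identities and the closed range theorem; each item is handled cleanly, and you are right to flag that (f) yields only an inclusion precisely because injectivity of $S$ forces weak${}^{*}$-density but not norm-density of $R(S^{*})$.

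There is, however, nothing to compare it to: the paper does not prove Proposition~\ref{p3.1} at all. It is stated with an explicit citation to \cite[p.~195]{basar2012summability} and used as a black box in the subsequent corollaries and in the proof of the final theorem. So your proposal supplies a proof where the paper simply imports the result from the literature.
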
 
Here we record few lemmas related to the boundness of an infinite matrix defined over sequence spaces, which are useful to our research.
\begin{lemma}\cite[p. 253]{choudhary1989functional} \label{l1}
The matrix $A=(a_{nk})$ gives rise to a bounded linear operator $T \in B(\ell_{1})$ from $\ell_1$ to itself if and only if the supremum of $\ell_1$ norms of  the columns of $A$ is bounded.
\end{lemma}
\begin{lemma}\cite[p. 245]{choudhary1989functional} \label{l2}
The matrix $A=(a_{nk})$ gives rise to a bounded linear operator $T \in B(\ell_\infty)$  from $\ell_\infty$ to itself if and only if the supremum of $\ell_1$  norms of the rows of $A$ is bounded.
\end{lemma}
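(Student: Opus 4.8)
The plan is to work directly from the coordinatewise action of the matrix, namely $(Tx)_n=\sum_{k=1}^{\infty}a_{nk}x_k$ for $x=(x_k)\in\ell_\infty$, and to establish the two implications separately; for the necessity half the key device is a family of test sequences lying in the unit ball of $\ell_\infty$ that reproduce the $\ell_1$-norm of a prescribed row.

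For sufficiency, suppose $M:=\sup_{n}\sum_{k=1}^{\infty}|a_{nk}|<\infty$. Fixing $x\in\ell_\infty$, I would first note that for each $n$ the series defining $(Tx)_n$ converges absolutely, since $\sum_{k}|a_{nk}||x_k|\le\norm{x}_\infty\sum_{k}|a_{nk}|\le M\norm{x}_\infty$. The same estimate yields $|(Tx)_n|\le M\norm{x}_\infty$ for every $n$, hence $Tx\in\ell_\infty$ with $\norm{Tx}_\infty\le M\norm{x}_\infty$. Linearity being clear, this shows $T\in B(\ell_\infty)$ and, in fact, $\norm{T}\le M$.

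For necessity, assume $T\in B(\ell_\infty)$. Fix an index $n$ and an arbitrary $N\in\mathbb{N}$, and define $x^{(N)}=(x^{(N)}_k)$ by $x^{(N)}_k=\overline{a_{nk}}/|a_{nk}|$ whenever $1\le k\le N$ and $a_{nk}\neq0$, and $x^{(N)}_k=0$ otherwise (in the real case this is just $x^{(N)}_k=\operatorname{sgn}(a_{nk})$ on $\{1,\dots,N\}$). Then $\norm{x^{(N)}}_\infty\le1$, while the $n$-th coordinate of $Tx^{(N)}$ equals $\sum_{k=1}^{N}|a_{nk}|$. Therefore $\sum_{k=1}^{N}|a_{nk}|\le\norm{Tx^{(N)}}_\infty\le\norm{T}$; letting $N\to\infty$ gives $\sum_{k=1}^{\infty}|a_{nk}|\le\norm{T}$, and since $n$ was arbitrary, $\sup_{n}\sum_{k}|a_{nk}|\le\norm{T}<\infty$.

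I do not anticipate a serious obstacle, as the statement is classical; the only delicate point is conceptual rather than computational. One must be sure that the identity $(Tx)_n=\sum_{k}a_{nk}x_k$ is available for \emph{every} $x\in\ell_\infty$ — in particular for the test sequences $x^{(N)}$ above — and not merely for finitely supported sequences, which fail to be dense in $\ell_\infty$. This is precisely the content of the hypothesis that the matrix $A$ ``gives rise to'' the operator $T$, so no approximation argument is required, and the same scheme (transposing the roles of rows and columns, and testing against standard unit vectors) recovers the companion statement Lemma~\ref{l1} for $\ell_1$.
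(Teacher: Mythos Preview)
Your argument is correct and is the standard one for this classical characterisation; both directions are handled cleanly, and your remark about the coordinatewise identity $(Tx)_n=\sum_k a_{nk}x_k$ being part of the hypothesis (rather than something to be recovered by density, which fails in $\ell_\infty$) is exactly the right caveat. Note, however, that the paper does not give its own proof of this lemma: it is simply quoted from \cite[p.~245]{choudhary1989functional} as a known result, so there is no in-paper argument to compare against. Your proof would serve perfectly well as a self-contained justification were one desired.
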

\begin{lemma}\cite[p. 254]{choudhary1989functional}\label{L3.5}
The matrix $A=(a_{nk})$ gives rise to a bounded linear operator $T\in B(\ell_p)(1<p<\infty)$ if $T \in B(\ell_1)\cap B(\ell_{\infty}).$
\end{lemma}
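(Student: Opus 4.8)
The plan is to establish this by a direct Schur-type estimate rather than by invoking the Riesz--Thorin interpolation theorem, since the self-contained computation is short. First I would fix notation. By Lemma \ref{l1}, the hypothesis $T\in B(\ell_1)$ means that $M_1:=\sup_k\sum_n|a_{nk}|<\infty$, and by Lemma \ref{l2}, $T\in B(\ell_\infty)$ means that $M_\infty:=\sup_n\sum_k|a_{nk}|<\infty$; that is, the columns and the rows of $A$ have uniformly bounded $\ell_1$ norms. Let $q$ denote the conjugate exponent of $p$, so that $\frac1p+\frac1q=1$.

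Next, for an arbitrary $x=(x_k)\in\ell_p$ and each fixed index $n$, the key step is to write $|a_{nk}|=|a_{nk}|^{1/q}\,|a_{nk}|^{1/p}$ and apply H\"older's inequality with exponents $q$ and $p$ to the sum $\sum_k|a_{nk}||x_k|$. This yields
\[
|(Tx)_n|\le\sum_k|a_{nk}||x_k|\le\Bigl(\sum_k|a_{nk}|\Bigr)^{1/q}\Bigl(\sum_k|a_{nk}||x_k|^p\Bigr)^{1/p}\le M_\infty^{1/q}\Bigl(\sum_k|a_{nk}||x_k|^p\Bigr)^{1/p}.
\]
Raising this to the $p$-th power, summing over $n$, interchanging the order of summation (legitimate since all terms are nonnegative), and using the column-sum bound $\sum_n|a_{nk}|\le M_1$ gives
\[
\sum_n|(Tx)_n|^p\le M_\infty^{p/q}\sum_k|x_k|^p\sum_n|a_{nk}|\le M_\infty^{p/q}M_1\sum_k|x_k|^p=M_\infty^{p/q}M_1\norm{x}_p^p,
\]
so that $\norm{Tx}_p\le M_1^{1/p}M_\infty^{1/q}\norm{x}_p$. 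Hence $A$ induces a bounded linear operator $T\in B(\ell_p)$, which is the claim.

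There is no substantial obstacle in this argument; the only point requiring a little care is the preliminary splitting $|a_{nk}|=|a_{nk}|^{1/q}|a_{nk}|^{1/p}$ before applying H\"older's inequality, since this is precisely what allows both the uniform row bound $M_\infty$ and the uniform column bound $M_1$ to enter the estimate. As an alternative one could derive the result at once from the Riesz--Thorin interpolation theorem applied to $\mathbb{N}$ with the counting measure, but the elementary computation above keeps the preliminaries self-contained.
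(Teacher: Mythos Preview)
Your argument is correct: this is the standard Schur test, and the splitting $|a_{nk}|=|a_{nk}|^{1/q}|a_{nk}|^{1/p}$ followed by H\"older's inequality is exactly the right device to bring both $M_1$ and $M_\infty$ into the estimate. The resulting bound $\norm{T}_p\le M_1^{1/p}M_\infty^{1/q}$ is the familiar interpolation-type inequality.

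Regarding comparison with the paper: the paper does not supply a proof of this lemma at all. It is stated with a citation to \cite[p.~254]{choudhary1989functional} and used as a black box, so there is nothing to compare your approach against. Your self-contained argument (or the Riesz--Thorin alternative you mention) is a perfectly good justification of the quoted result.
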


\section{\bf Spectra of $T_0$}

  It is already mentioned that we study the spectral properties of $T$ by using the spectral properties of $T_0$ and compact perturbation technique. In this section we derive the spectrum and fine spectrum of $T_0$. The notation $\|T\|_p$ denotes the operator norm of an operator $T \in B(\ell_{p})$ where $1 \leq p \leq \infty$.  
\begin{theorem} The operator
	$T_0:\ell_p \rightarrow \ell_p$ is a bounded linear operator which satisfies the following inequality
  $$	\left(\frac{|r_1|^p+|r_2|^p+|s_1|^p+|s_2|^p}{2}\right)^{\frac{1}{p}}\leq \norm{T_0}_{p}\leq \left( 3^{p-1}\left(|r_1|^p+2|s_1|^p+|r_2|^p+2|s_2|^p\right)\right)^{\frac{1}{p}}.$$
\end{theorem}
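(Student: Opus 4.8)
The plan is to prove the two inequalities separately: the upper bound by a direct norm estimate on the penta-diagonal matrix, and the lower bound by testing $T_0$ on a single explicit vector.

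For the upper bound I would work from the matrix representation of $T_0$. For $x=(x_n)\in\ell_p$ and each $n\ge 1$, the coordinate $(T_0x)_n$ is a sum of at most three terms, $(T_0x)_n=d_nx_n+e_nx_{n+2}+f_nx_{n-2}$ (the last term being absent when $n=1,2$), where $d_n\in\{r_1,r_2\}$ sits on the main diagonal and $e_n,f_n\in\{s_1,s_2\}$ sit on the two off-diagonal bands, the choice being dictated by the parity of $n$. Applying the elementary convexity inequality $|\alpha_1+\alpha_2+\alpha_3|^p\le 3^{p-1}(|\alpha_1|^p+|\alpha_2|^p+|\alpha_3|^p)$ (valid for $p\ge 1$) coordinatewise and then summing over $n$, the resulting triple sum breaks into three pieces, one per band. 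After the obvious reindexing, each off-diagonal piece takes the form $|s_1|^p\sum_{m\ \mathrm{odd}}|x_m|^p+|s_2|^p\sum_{m\ \mathrm{even}}|x_m|^p\le(|s_1|^p+|s_2|^p)\norm{x}_p^p$, and the diagonal piece is bounded the same way by $(|r_1|^p+|r_2|^p)\norm{x}_p^p$. Adding the three estimates gives $\norm{T_0x}_p^p\le 3^{p-1}\big(|r_1|^p+2|s_1|^p+|r_2|^p+2|s_2|^p\big)\norm{x}_p^p$, which is simultaneously the claimed upper bound and the assertion $T_0\in B(\ell_p)$.

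For the lower bound I would test on $x=(1,1,0,0,\dots)$, for which $\norm{x}_p^p=2$. Reading off from $T_0=S_r^2D_1^\prime+D_2^\prime S_\ell^2+D_3^\prime$: the term $D_3^\prime x=(r_1,r_2,0,0,\dots)$, the middle term vanishes since $S_\ell^2x=0$, and $S_r^2D_1^\prime x=(0,0,s_1,s_2,0,\dots)$, so $T_0x=(r_1,r_2,s_1,s_2,0,0,\dots)$ and hence $\norm{T_0x}_p^p=|r_1|^p+|r_2|^p+|s_1|^p+|s_2|^p$. Dividing by $\norm{x}_p^p=2$ and taking $p$-th roots yields the stated lower bound for $\norm{T_0}_p$.

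I do not expect a genuine obstacle here; the whole argument is elementary. The only points requiring a little care are in the upper bound: observing that for $n=1,2$ only two terms occur (so the $3^{p-1}$ estimate, which dominates $2^{p-1}$, still applies), and matching the odd/even band entries to the shifted indices so that, when the three partial sums are recombined, no $|x_m|^p$ is charged a coefficient exceeding $|r_1|^p+2|s_1|^p+|r_2|^p+2|s_2|^p$. One could alternatively deduce mere boundedness from Lemma~\ref{L3.5} by checking that the row and column $\ell_1$-norms of the matrix of $T_0$ are uniformly bounded, but that route does not yield the explicit constant appearing in the statement.
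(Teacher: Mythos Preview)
Your proposal is correct and follows essentially the same approach as the paper. For the lower bound you test on the identical vector $e=(1,1,0,0,\dots)$ and obtain $T_0e=(r_1,r_2,s_1,s_2,0,\dots)$, exactly as in the paper; for the upper bound both you and the paper apply the convexity inequality $|\alpha_1+\alpha_2+\alpha_3|^p\le 3^{p-1}(|\alpha_1|^p+|\alpha_2|^p+|\alpha_3|^p)$ coordinatewise and then sum, the only cosmetic difference being that the paper first separates odd- and even-indexed rows before summing whereas you sum over all $n$ and separate by band afterward.
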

\begin{proof}
As linearity of $T_0$ is trivial, we omit it. Let $e=(1,1,0,0,...) \in\ell_{p}$. Then $T_{0}(e)=(r_1,r_2,s_1,s_2,0,...)$ and one can observe that
\begin{equation*}
\frac{\norm{T_0(e)}_{p}}{\norm{e}_{p}}=\left( \frac{|r_1|^p+|r_2|^p+|s_1|^p+|s_2|^p}{2}\right)^{\frac{1}{p}}.
\end{equation*}
This proves \[ \left( \frac{|r_1|^p+|r_2|^p+|s_1|^p+|s_2|^p}{2}\right)^{\frac{1}{p}}\leq \norm{T_0}_{p}.\]

\noindent Also, let $x=\{x_n\}\in \ell_p$ and $x_n=0$ if $n \leq0$.
Then,
\begin{align*} 
\norm{T_0(x)}_{p}^p=&\sum_{n=1}^{\infty}|s_{1}x_{2n-3}+r_1x_{2n-1}+s_1x_{2n+1}|^p+	\sum_{n=1}^{\infty}|s_2x_{2n-2}+r_2x_{2n}+s_2x_{2n+2}|^p\\
\leq& \sum_{n=1}^{\infty}(|s_{1}x_{2n-3}|+|r_1 x_{2n-1}|+|s_1 x_{2n+1}|)^p \\ 
& +\sum_{n=1}^{\infty}(|s_2x_{2n-2}|+|r_2x_{2n}|+|s_2x_{2n+2}|)^p.\\
\end{align*}
 By Jensen’s inequality we get,
\begin{align*}
\norm{T_0(x)}_{p}^p \leq & 3^{p-1}\sum_{n=1}^{\infty}\left( |s_1x_{2n-3}|^p+|r_1x_{2n-1}|^p+|s_1x_{2n+1}|^p\right)\\ +& 3^{p-1}\sum_{n=1}^{\infty}\left( |s_2x_{2n-2}|^p+|r_2x_{2n}|^p+|s_2x_{2n+2}|^p\right)\\
\leq& 3^{p-1}\left(|r_1|^p+2|s_1|^p+|r_2|^p+2|s_2|^p\right){\norm{x}_p^p}.  
\end{align*}
This implies,
$$\norm{T_0} \leq \left( 3^{p-1}\left(|r_1|^p+2|s_1|^p+|r_2|^p+2|s_2|^p\right)\right)^{\frac{1}{p}}.$$ This completes the proof. 
\end{proof}
The following theorem proves the non-existence of eigenvalues of the operator $T_0$ in $\ell_p$.
\begin{theorem}\label{t1.2}
The point spectrum of $T_0$ over $\ell_p$ is given by $\sigma_p(T_0,\ell_p)=\emptyset.$
\end{theorem}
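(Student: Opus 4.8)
The plan is to work directly with the eigenvalue equation $T_0 x = \lambda x$ and to show that, for every $\lambda \in \mathbb{C}$, its only solution in $\ell_p$ is $x = 0$. The crucial structural feature is that the penta-diagonal matrix of $T_0$ does not couple odd- and even-indexed coordinates: setting $u_n = x_{2n-1}$ and $v_n = x_{2n}$, the equation $T_0 x = \lambda x$ is equivalent to the two independent second-order recurrences
\[ s_1 u_{n+1} + (r_1 - \lambda) u_n + s_1 u_{n-1} = 0, \qquad s_2 v_{n+1} + (r_2 - \lambda) v_n + s_2 v_{n-1} = 0, \quad n \ge 1, \]
each subject to the boundary condition $u_0 = 0$ (resp.\ $v_0 = 0$) arising from the first two rows of $T_0$. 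Since $\norm{x}_p^p = \norm{u}_p^p + \norm{v}_p^p$, we have $x \in \ell_p$ if and only if both $u = \{u_n\}$ and $v = \{v_n\}$ lie in $\ell_p$, so it suffices to prove: a constant-coefficient recurrence $s\,w_{n+1} + (r - \lambda) w_n + s\,w_{n-1} = 0$ with $s \ne 0$ and $w_0 = 0$ admits no nonzero solution in $\ell_p$.

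To this end I would pass to the characteristic equation $s t^2 + (r - \lambda) t + s = 0$. Its two roots have product $s/s = 1$, so neither root is zero and they form a reciprocal pair $\{t, t^{-1}\}$; in particular they are either both on the unit circle or both off it. I then split into three exhaustive cases. If $|t_1| \ne 1$, then (the roots being distinct, since a repeated root would be $\pm 1$) after relabeling $|t_1| < 1 < |t_2|$, and the general solution $w_n = A t_1^n + B t_2^n$ lies in $\ell_p$ only if $B = 0$; the boundary condition then gives $w_0 = A = 0$, so $w \equiv 0$. If $t_1 \ne t_2$ with $|t_1| = |t_2| = 1$, I would rule out nonzero $\ell_p$ solutions by noting that $|w_n| = |A + B\omega^n|$ with $\omega = t_2 t_1^{-1}$, $|\omega| = 1$, $\omega \ne 1$; the sequence $\{\omega^n\}$ is either periodic (if $\omega$ is a root of unity) or dense on the unit circle, so $|A + B\omega^n| \not\to 0$ whenever $(A,B) \ne (0,0)$, and hence $w \notin \ell_p$. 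Finally, if $t_1 = t_2$, then $t_1^2 = 1$ forces $t_1 = \pm 1$, the solution is $w_n = (A + Bn)t_1^n$, and $\ell_p$-membership again forces $A = B = 0$.

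Combining the three cases, each recurrence has only the trivial $\ell_p$ solution, so $u \equiv 0$ and $v \equiv 0$, whence $x = 0$; consequently no $\lambda \in \mathbb{C}$ is an eigenvalue of $T_0$ and $\sigma_p(T_0, \ell_p) = \emptyset$. The decoupling computation and the solution of the constant-coefficient recurrences are routine; the one genuinely delicate point is the unit-circle case, where one must show that no nontrivial linear combination of two distinct unimodular geometric sequences can belong to $\ell_p$ — this is precisely where the dichotomy between rational (periodic) and irrational (equidistributed) rotations of the circle is used.
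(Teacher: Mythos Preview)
Your proof is correct and follows essentially the same route as the paper: decouple $T_0x=\lambda x$ into the odd- and even-indexed subsequences, obtain the constant-coefficient second-order recurrences with characteristic roots satisfying $t_1 t_2 = 1$, and do a case analysis on the roots. The paper organizes the cases by the value of $p_i=(r_i-\lambda)/s_i$ (namely $p_i=2$, $p_i=-2$, $p_i\notin\{-2,2\}$), first imposes the boundary condition $y_0=0$ to single out the specific solution $y_n=\dfrac{\alpha_1^n-\alpha_2^n}{\alpha_1-\alpha_2}\,y_1$, and then observes this cannot lie in $\ell_p$ since $\alpha_1\alpha_2=1$; you instead organize by whether the roots lie on or off the unit circle, first use the $\ell_p$ constraint to kill the growing exponential, and then invoke $w_0=0$. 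One small remark: in your unimodular case you need only that $\omega^n$ fails to converge when $|\omega|=1$, $\omega\neq 1$, which is elementary; the appeal to equidistribution is harmless but more than is required.
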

\begin{proof}
Consider $(T_0 - \lambda I)x=0$ for $\lambda\in\mathbb{C}$ and $x=\{x_n\}\in \mathbb{C^N}$. This gives the following system of equations

	\[
		\left. \begin{array}{lr}
		(r_{1} - \lambda) x_{1}+s_{1} x_{3}&=0\\
		(r_{2} - \lambda) x_{2}+s_{2} x_{4}&=0 \\
		s_1 x_1 + (r_1 - \lambda) x_3 + s_1 x_5 &= 0\\
		s_2x_2+(r_2 - \lambda) x_4+s_2x_6&=0 \\
		                       &\vdots \\
    	s_1 x_{2n-1}+(r_1 - \lambda) x_{2n+1}+s_1 x_{2n+3}&=0\\
		s_2 x_{2n}+(r_2-\lambda)x_{2n+2}+s_2 x_{2n+4}&=0\\
		                             &\vdots
	\end{array}\right\}
\]
If $x_1 = 0$ then $x_{2n-1} = 0$ for all $n \in \mathbb{N}$. Similarly $x_2 = 0$ implies $x_{2n} = 0$ for all $n \in \mathbb{N}.$ Therefore let $(x_1, x_2) \neq (0,0)$ and consider two sequences $\{y_n\}$ and $\{z_n\}$ where  $y_n=x_{2n-1}$ and $z_n=x_{2n}$, $n \in \mathbb{N}$ respectively. Then the system of equations of $(T_0-\lambda I)x=0$ reduces to
 \begin{equation}\label{3.1}
  y_{n}+p_1y_{n+1}+y_{n+2}=0,
 \end{equation}
 \begin{equation} \label{3.2}
 z_{n}+p_2z_{n+1}+z_{n+2}=0,\\
 \end{equation}
where $p_1=\frac{r_1-\lambda}{s_1}$, $p_2=\frac{r_2-\lambda}{s_2}$, $n \in \mathbb{N}\cup \{0\}$ and $y_0=z_0=0$. If $x_1 \neq 0,$ the general solution of the difference equation (\ref{3.1}) is given by (\cite[p. 75]{difference},
\begin{equation} \label{eqq1}
y_n = \begin{cases}
& (c_1 + nc_2) (-1)^n, \ \ \mbox{if} \ p_1 = 2\\
& c_1+ n c_2, \ \ \mbox{if} \ p_1 = -2\\
& c_1 \alpha_1^n + c_2 \alpha_2^n, \ \ \mbox{if} \ p_1 \notin \{-2,2\}
\end{cases}
\end{equation}
where $c_1,$ $c_2$ are arbitrary constants and $\alpha_1,$ $\alpha_2$ are the roots of the polynomial 
\begin{equation} \label{ch1}
y^2+p_1y+1=0
\end{equation}
which is called the characteristic polynomial of (\ref{3.1}). The following two equalities 
\begin{equation*}
\alpha_1\alpha_2=1  \text{ and }  \alpha_1+\alpha_2= -p_1
\end{equation*}
are useful.
Equation (\ref{eqq1}) suggests there are three cases to be considered.\\

\noindent Case 1: If $p_1=2$ (i.e., $\lambda =r_1-2s_1 $). In this case the general solution of (\ref{3.1}) is
 \begin{equation*}
 	y_{n}=(c_1+c_2n)(-1)^{n}, \ n \in \mathbb{N}\cup\{0\}
 \end{equation*}
with the initial condition $y_{0}=0$ which gives $c_1 = 0.$ This reduces the solution as $y_n = nc_2 (-1)^n.$ This also implies $c_2 = -y_1$ and the solution in this case is 
\[y_n = ny_1 (-1)^{n+1}, ~~ n \in \mathbb{N}.\]

\noindent Case 2: If $p_1=-2$ (i.e., $\lambda =r_1+2s_1 $). Similar as Case 1, the solution reduces to
\[y_n = ny_1,~~ n \in \mathbb{N}.\]

\noindent Case 3: If $p_1 \notin \{-2, 2\}.$ The general solution of (\ref{3.1}) is given by
 \begin{equation*}
 	y_{n}=c_1 \alpha_1^n+c_2 \alpha_2^n.
 \end{equation*}
With the help of initial condition $y_0=0$ and by using the equalities $\alpha_1\alpha_2=1$, $\alpha_1+\alpha_2= -p_1$, one can obtain that $c_2 = -c_1$ and the solution reduces to 
\[y_n = \frac{\alpha_1^n - \alpha_2^n}{\alpha_1 - \alpha_2} y_1,~~ n \in \mathbb{N}.\]
If $y_1 \neq 0$ then  $\{y_n\} \notin \ell_p$ in Case 1 and Case 2. In Case 3 $\{y_n\} \in \ell_p$ if and only if $|\alpha_1| <1$ and $|\alpha_2|<1$ which can not be the case since $\alpha_1 \alpha_2  = 1.$ Hence in all the three cases $\{y_n\} \in \ell_p$ if and only if $y_1 = 0$ and this leads to the fact that $x_1 \neq 0$. Hence there is no non-trivial solution of ({\ref{3.1}}).

Similarly for the difference equation (\ref{3.2}), if $x_2 \neq 0,$ the general solution $\{z_n\}$ is of the form
\begin{equation} \label{eqq2}
z_n = \begin{cases}
& (d_1 + nd_2) (-1)^n, \ \ \mbox{if} \ p_2 = 2\\
& d_1+ n d_2, \ \ \mbox{if} \ p_2 = -2\\
& d_1 \beta_1^n + d_2 \beta_2^n, \ \ \mbox{if} \ p_2 \notin \{-2,2\}
\end{cases}
\end{equation}
where $d_1,$ $d_2$ are arbitrary constants and $\beta_1,$ $\beta_2$ are the roots of the polynomial 
\begin{equation} \label{ch2}
z^2+p_2z+1=0
\end{equation}
which is called the characteristic polynomial of difference equation (\ref{3.2}). In a similar way, it can be proved that $\{z_n\} \in \ell_p$ if and only if $z_1 = 0$ and this leads to the trivial solution of (\ref{3.2}). Hence, there does not exist any non-trivial solution of the system $(T_0 - \lambda I)x = 0$ such that $x \in \ell_p.$ This proves the required result.
\end{proof}

\begin{remark}
The solution $x = \{x_n\}$ of the system $Tx = \lambda x,$ which are obtained in terms of the sequences $\{y_n\}$ and $\{z_n\}$ in the equations (\ref{eqq1}) and (\ref{eqq2}) respectively, actually depends on the unknown $\lambda.$ Therefore, instead of writing $x_n(\lambda),$ we write $x_n$ for the sake of brevity throughout this paper except in Theorem \ref{t4.12} where the dependency of the solutions on $\lambda$ is vital. 
\end{remark}

The adjoint operator of $T_0$ is $T_0^*$ which is defined over sequence space $\ell_p^*$ where $\ell_p^*$ denotes the dual space of $\ell_p$ which is isomorphic to $\ell_q$ where $\frac{1}{p}+ \frac{1}{q}=1$.
\begin{corollary}\label{c1.3}
	The point spectrum of adjoint operator $T_0^*$ over the sequence space $\ell_p^*$ is given by
	 $\sigma_p(T_0^*,\ell_p^*)=\emptyset.$
\end{corollary}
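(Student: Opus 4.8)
The plan is to reduce the statement to Theorem~\ref{t1.2} by identifying the adjoint operator explicitly. Since $\ell_p^*$ is isometrically isomorphic to $\ell_q$ with $\frac{1}{p}+\frac{1}{q}=1$, and $1<p<\infty$ forces $1<q<\infty$, it suffices to show that the operator on $\ell_q$ represented by the matrix of $T_0^*$ has empty point spectrum. First I would compute that matrix: the diagonal operators $D_1^\prime,D_2^\prime,D_3^\prime$ have real diagonal entries, hence are self-adjoint, and under the standard identification the adjoint of $S_r$ is $S_\ell$ (and of $S_\ell$ is $S_r$). Using $(AB)^*=B^*A^*$ one gets
\[
T_0^* = \left(S_r^2 D_1^\prime\right)^* + \left(D_2^\prime S_\ell^2\right)^* + \left(D_3^\prime\right)^* = D_1^\prime S_\ell^2 + S_r^2 D_2^\prime + D_3^\prime
\]
acting on $\ell_q$. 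Because $\mbox{diag}(D_1^\prime)=\mbox{diag}(D_2^\prime)=\{s_1,s_2,s_1,s_2,\dots\}$, a short computation shows $T_0^*$ is represented by exactly the same penta-diagonal matrix as $T_0$ (indeed the matrix displayed for $T_0$ is symmetric), now viewed as an operator on $\ell_q$.

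Next I would run the eigenvalue analysis. Writing $(T_0^*-\lambda I)x=0$ for $x=\{x_n\}\in\ell_q$ and splitting into odd- and even-indexed subsequences $y_n=x_{2n-1}$, $z_n=x_{2n}$ exactly as in the proof of Theorem~\ref{t1.2}, one arrives at the same difference equations (\ref{3.1}) and (\ref{3.2}) with the same initial conditions $y_0=z_0=0$. The solution formulas (\ref{eqq1}) and (\ref{eqq2}), together with $\alpha_1\alpha_2=\beta_1\beta_2=1$, then apply unchanged. The concluding argument of Theorem~\ref{t1.2}---that a linearly growing solution cannot lie in the sequence space, and that $\alpha_1\alpha_2=1$ (resp.\ $\beta_1\beta_2=1$) rules out both characteristic roots having modulus less than one---uses only that the ambient space is $\ell_t$ for some finite $t$, hence holds verbatim with $t=q$. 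Therefore $y_1=z_1=0$, forcing $x=0$, so $T_0^*-\lambda I$ is injective for every $\lambda\in\mathbb{C}$, i.e.\ $\sigma_p(T_0^*,\ell_p^*)=\emptyset$.

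I do not expect a serious obstacle; the only point requiring care is the bookkeeping in the first step---checking that passing to the adjoint under $\ell_p^*\cong\ell_q$ returns the same band matrix (equivalently, that $(S_r)^*=S_\ell$ on these spaces and that the real diagonal factors are unaffected)---after which the proof is a repetition of the previous one with $q$ in place of $p$. Alternatively, one could invoke Proposition~\ref{p3.1}(e), which gives $\sigma_p(T_0^*,\ell_p^*)=\sigma_{co}(T_0,\ell_p)$, but that merely shifts the work to computing the compression spectrum of $T_0$, which is not yet available at this stage.
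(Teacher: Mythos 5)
Your proposal is correct and follows the same route as the paper: the adjoint is represented by the transpose of the (symmetric) matrix of $T_0$, acting on $\ell_p^*\cong\ell_q$, and the eigenvalue analysis of Theorem \ref{t1.2} applies verbatim since it only uses membership in $\ell_t$ for some finite $t$. Your version simply spells out the bookkeeping (e.g.\ $(S_r)^*=S_\ell$ and $D_1'=D_2'$ with real entries) that the paper leaves implicit.
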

\begin{proof}
It is well known that the adjoint operator $ {T_0}^*:\ell_p^* \to \ell_p^* $, is represented by transpose of the matrix $T_0$. Since $T_0$ is represented by a symmetric matrix, using the same argument as Theorem \ref{t1.2}, it is easy to prove that $\sigma_p(T_0^*,\ell_p^*)=\emptyset.$
\end{proof}
\begin{corollary} \label{res}
	The residual spectrum of $T_0$ over the sequence space $\ell_{p}$ is given by
$\sigma_r(T_0,\ell_p)=\emptyset$.
\end{corollary}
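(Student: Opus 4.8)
The plan is to reduce the claim to Corollary \ref{c1.3} via the standard identification of the residual spectrum with (part of) the compression spectrum, together with Proposition \ref{p3.1}(e). First I would recall the definitions: by construction, any $\lambda \in \sigma_r(T_0,\ell_p)$ satisfies $(T_0-\lambda I)^{-1}$ exists but the range $R(T_0-\lambda I)$ fails to be dense in $\ell_p$; in particular $\overline{R(T_0-\lambda I)} \neq \ell_p$, so $\lambda \in \sigma_{co}(T_0,\ell_p)$. Hence $\sigma_r(T_0,\ell_p) \subseteq \sigma_{co}(T_0,\ell_p)$.

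Next I would invoke Proposition \ref{p3.1}(e), which gives $\sigma_{co}(T_0,\ell_p) = \sigma_p(T_0^*,\ell_p^*)$. Combining with the previous inclusion yields $\sigma_r(T_0,\ell_p) \subseteq \sigma_p(T_0^*,\ell_p^*)$. Finally, Corollary \ref{c1.3} states $\sigma_p(T_0^*,\ell_p^*) = \emptyset$, so the chain of inclusions forces $\sigma_r(T_0,\ell_p) = \emptyset$, which is exactly the assertion.

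There is essentially no obstacle here: the entire argument is a two-line deduction from results already established in the excerpt, and all the genuine work (showing $T_0$ and its adjoint have no eigenvalues on $\ell_p$, using the symmetry of the matrix representing $T_0$) has already been carried out in Theorem \ref{t1.2} and Corollary \ref{c1.3}. The only point worth stating carefully is the inclusion $\sigma_r \subseteq \sigma_{co}$, which is immediate from the definitions of the residual and compression spectra recorded in Section 2. One could alternatively phrase the proof without passing through $\sigma_{co}$ explicitly, but routing it through Proposition \ref{p3.1}(e) makes the dependence on Corollary \ref{c1.3} transparent and keeps the write-up short.
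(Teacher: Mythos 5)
Your proof is correct and follows essentially the same route as the paper: both arguments reduce the claim to the emptiness of $\sigma_p(T_0^*,\ell_p^*)$ from Corollary \ref{c1.3}, the paper via the identity $\sigma_r(T_0,\ell_p)=\sigma_p(T_0^*,\ell_p^*)\setminus\sigma_p(T_0,\ell_p)$ and you via the inclusion $\sigma_r(T_0,\ell_p)\subseteq\sigma_{co}(T_0,\ell_p)=\sigma_p(T_0^*,\ell_p^*)$. The difference is purely cosmetic.
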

\begin{proof}
	We know that the operator $T$ has a dense range if and only if $T^*$ is one to one \cite[p.197]{basar2012summability}. Using this we have the following relation
\[\sigma_r(T_0,\ell_p)=\sigma_p(T_0^*,\ell_p^*)\setminus\sigma_p(T_0,\ell_p).\]
Hence, $\sigma_r(T_0,\ell_p)=\emptyset.$
\end{proof}

Following that, we obtain the spectrum of $T_0$.
\begin{theorem} \label{spec}
	The spectrum of $T_0$ over $\ell_{p}$ is given by
	\begin{equation*}
	\sigma(T_0,\ell_p) =\left[r_{1}-2s_1, r_1+2s_1\right] \cup [r_2-2s_2, r_2+2s_2].
	\end{equation*}
\end{theorem}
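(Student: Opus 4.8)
The plan is to exploit the fact that $T_0$ decouples completely into two half-line tridiagonal Toeplitz (discrete free Jacobi) operators, one acting on the odd coordinates and one on the even coordinates, and then to identify the spectrum of each piece.

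First I would introduce the isometric isomorphism $U\colon \ell_p \to \ell_p \oplus_p \ell_p$ (the $\ell_p$-direct sum) given by $Ux = \bigl(\{x_{2n-1}\}_{n\ge 1},\{x_{2n}\}_{n\ge 1}\bigr)$; it is an isometry because $\norm{Ux}^p = \norm{\{x_{2n-1}\}}_p^p + \norm{\{x_{2n}\}}_p^p = \norm{x}_p^p$. A direct inspection of the penta-diagonal matrix of $T_0$ shows that $U T_0 U^{-1} = A_1 \oplus A_2$, where for $j=1,2$ the operator $A_j\colon \ell_p \to \ell_p$ is the one-sided tridiagonal Toeplitz operator with main diagonal $r_j$ and both adjacent diagonals equal to $s_j$, i.e. $A_j = r_j I + s_j\bigl(S_r + S_\ell\bigr)$. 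Consequently $\sigma(T_0,\ell_p) = \sigma(A_1,\ell_p)\cup\sigma(A_2,\ell_p)$, and it suffices to prove $\sigma(A_j,\ell_p) = [r_j - 2s_j, r_j + 2s_j]$ for each $j$ (read as a set).

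For the inclusion $\sigma(A_j,\ell_p)\subseteq[r_j-2s_j,r_j+2s_j]$, fix $\lambda\notin[r_j-2s_j,r_j+2s_j]$ and set $p_j=(r_j-\lambda)/s_j$, so that $p_j\notin[-2,2]$. Let $\alpha_1,\alpha_2$ be the roots of $t^2+p_jt+1=0$; since $\alpha_1\alpha_2=1$ and $p_j\notin[-2,2]$ the roots cannot have equal modulus, so after relabelling $|\alpha_1|<1<|\alpha_2|$. Using $\alpha_1,\alpha_2$ I would write down the Green's function $G=(G_{nm})$ of $A_j-\lambda I$ on the half-line — the solution of $s_j g_{n+1}+(r_j-\lambda)g_n+s_j g_{n-1}=\delta_{nm}$ satisfying $g_0=0$ — which admits a geometric bound of the form $|G_{nm}|\le C\,|\alpha_1|^{|n-m|}$. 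By Lemma~\ref{l1} and Lemma~\ref{l2} this matrix has uniformly bounded column and row $\ell_1$-norms (sum of a geometric series), hence defines a bounded operator on $\ell_1$ and on $\ell_\infty$, and therefore on $\ell_p$ by Lemma~\ref{L3.5}; a direct computation verifies it is a two-sided inverse of $A_j-\lambda I$, so $\lambda\in\rho(A_j,\ell_p)$. For the reverse inclusion I would produce a Weyl sequence: given $\lambda\in[r_j-2s_j,r_j+2s_j]$, write $\lambda=r_j+2s_j\cos\theta$ with $\theta\in[0,\pi]$, so that $y_n=e^{in\theta}$ solves $s_j y_{n+1}+(r_j-\lambda)y_n+s_j y_{n-1}=0$. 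Putting $y^{(N)}_n=e^{in\theta}$ for $N\le n\le 2N$ and $y^{(N)}_n=0$ otherwise, one has $\norm{y^{(N)}}_p$ of order $N^{1/p}$, while $(A_j-\lambda I)y^{(N)}$ is supported on a bounded number of indices near $n=N$ and $n=2N$ with entries of size $O(1)$; hence $\norm{(A_j-\lambda I)y^{(N)}}_p/\norm{y^{(N)}}_p=O(N^{-1/p})\to 0$, so $A_j-\lambda I$ is not bounded below and $\lambda\in\sigma_{app}(A_j,\ell_p)\subseteq\sigma(A_j,\ell_p)$. Combining the two inclusions completes the proof. As a consistency check, this matches the already-proved facts $\sigma_p(T_0,\ell_p)=\emptyset$ and $\sigma_r(T_0,\ell_p)=\emptyset$, which force the whole spectrum to be continuous, exactly as the Weyl-sequence description indicates.

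The step I expect to be the main obstacle is the "outside" inclusion: writing the Green's function correctly with the boundary condition at the first coordinate, establishing the geometric decay, and checking both the $\ell_1/\ell_\infty$ boundedness and the identities $G(A_j-\lambda I)=(A_j-\lambda I)G=I$. The decoupling, the root analysis $|\alpha_1|<1<|\alpha_2|$, and the Weyl-sequence estimate are then routine. If one prefers, the "outside" step can be replaced by the Fredholm theory recalled in Section~2: $A_j-\lambda I$ is a banded Toeplitz operator whose symbol $r_j+2s_j\cos\theta-\lambda$ does not vanish for $\lambda$ off the segment and has winding number $0$, hence $A_j-\lambda I$ is Fredholm of index $0$; being injective by the argument of Theorem~\ref{t1.2}, it is invertible.
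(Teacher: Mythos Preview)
Your proposal is correct and follows essentially the paper's strategy: the odd/even decoupling already used in Theorem~\ref{t1.2}, the root analysis $|\alpha_1|<1<|\alpha_2|$ for the characteristic polynomials~\eqref{ch1}--\eqref{ch2} when $\lambda$ lies off the segments, and an explicit inverse matrix with geometrically decaying entries whose boundedness on $\ell_p$ is obtained via Lemmas~\ref{l1}, \ref{l2}, and~\ref{L3.5}. The paper works directly with the entries of $(T_0-\lambda I)^{-1}$ rather than through your isometry $U$ and Green's-function language, and for the reverse inclusion argues via the explicit formal inverse when $|\alpha_1|=|\alpha_2|=1$ rather than exhibiting a Weyl sequence, but these are only presentational differences.
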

\begin{proof}
	First we prove the inclusion relation\[\sigma \left(  T_0, \ell_{p}\right) \subseteq\left[ r_{1}-2s_1, r_1+2s_1\right] \cup [r_2-2s_2, r_2+2s_2].\]
	Let $\lambda\notin[r_{1}-2s_1, r_1+2s_1] \cup [r_2-2s_2, r_2+2s_2] $. Since point spectrum of $T_0$ over $\ell_p$ is empty, $(T_0-\lambda I)^{-1}$ exists for all $\lambda \in \mathbb{C}$. Consider the characteristic polynomials (\ref{ch1}) and (\ref{ch2}) as defined in Theorem \ref{t1.2} which has the roots $\alpha_1,$ $\alpha_2$ and $\beta_{1},$ $\beta_{2}$ respectively. Since $\lambda \in \mathbb{C}\setminus \left( \left[r_{1}-2s_1, r_1+2s_1\right] \cup [r_2-2s_2, r_2+2s_2]\right)$, either $|\alpha_1|<1,|\alpha_2|>1$ or $|\alpha_1|>1,|\alpha_2|<1$. Similar argument also applies for the roots $\beta_1$ and $\beta_2$. Without loss of generality, we can assume that $|\alpha_1|<1<|\alpha_2|$ and $|\beta_{1}|<1<|\beta_{2}|$. One can check that  the infinite matrix $B = (b_{nk})$ mentioned below is the inverse of $(T_0-\lambda I)$, where
	\begin{equation*}
			b_{nk} =\begin{cases}
				\frac{1}{{s_1(\alpha_1^2-1)}} \left(\alpha_1^{\frac{n+1}{2}-\frac{k+1}{2}+1} - \alpha_1^{\frac{n+1}{2}+\frac{k+1}{2}+1}\right),
				& \text{if $n, k$ both are odd and $n\geq k$ }\\\\ 
				\frac{1}{s_1(\alpha_1^2-1)} \left( \alpha_1^{\frac{k+1}{2}-\frac{n+1}{2}+1} - \alpha_1^{\frac{k+1}{2}+\frac{n+1}{2}+1} \right), & \text{if $n, k$ both are odd and $n<k$}\\\\
				\frac{1}{s_2(\beta_1^2-1)} \left(\beta_1^{\frac{n}{2}-\frac{k}{2}+1} - \beta_1^{\frac{n}{2}+\frac{k}{2}+1} \right), & \text{if $n, k$ both are even and $n\geq k$}\\\\
				\frac{1}{s_2(\beta_1^2-1)} \left(\beta_1^{\frac{k}{2}-\frac{n}{2}+1} - \beta_1^{\frac{k}{2}+\frac{n}{2}+1} \right), & \text{if $n, k$ both are even and $n<k$}\\\\
				0, & \text{otherwise}.
			\end{cases}
	\end{equation*}
From Lemma \ref{l1} it follows that
\begin{equation*}
	\norm{(T_0-\lambda I)^{-1} }_1=  \sup\limits_{k} \sum\limits_{n=1}^{\infty}|b_{nk}|. 
\end{equation*}
Let for each $k \in \mathbb{N},$ $S_k$ denotes the sum $\sum\limits_{n=1}^{\infty}|b_{nk}|.$ Thus
\[S_k	=\sum\limits_{n=1}^{k-1}|b_{nk}|+\sum\limits_{n=k}^{\infty}|b_{nk}|.\]
Two cases are considered.\\
Case 1: ($k$ is odd) In this case 
\[\sum\limits_{n=1}^{k-1}|b_{nk}| = |b_{1k}| + |b_{3k}| + \cdots + |b_{k-2,k}|\]
as $b_{nk} = 0$ when $n$ is even. Putting $n=2q-1$ where $q$ runs over $1, 2, \cdots , \frac{k-1}{2}$ we obtain
\[\sum_{n=1}^{k-1}|b_{nk}|=\sum_{q=1}^{\frac{k-1}{2}}|b_{2q-1,k}|=\frac{1}{s_1(\alpha_1^2-1)} \sum_{q=1}^{\frac{k-1}{2}}|\alpha_1^{\frac{k+1}{2}-q+1}-\alpha_1^{\frac{k+1}{2}+q+1}|.\]
Now,
\begin{align*}
	\sum_{q=1}^{\frac{k-1}{2}}|\alpha_1^{\frac{k+1}{2}-q+1}-\alpha_1^{\frac{k+1}{2}+q+1}|
\leq&|\alpha_1|^{\frac{k+1}{2}+1}\sum_{q=1}^{\frac{k-1}{2}}|\alpha_1|^{-q}+|\alpha_1|^{\frac{k+1}{2}+1}\sum_{q=1}^{\frac{k-1}{2}}|\alpha_1|^q\\=&|\alpha_1|^{\frac{k+3}{2}}\left( \frac{|\alpha_1|^\frac{1-k}{2}-1}{1-|\alpha_1|}+\frac{|\alpha_1|-|\alpha_1|^\frac{k+1}{2}}{1-|\alpha_1|}\right) \\
=&\frac{|\alpha_1|^2-|\alpha_1|^{\frac{k+3}{2}}+|\alpha_1|^\frac{k+5}{2}-|\alpha_1|^{k+2}}{1-|\alpha_1|}.
\end{align*}\\
As $|\alpha_1|<1$, the above relation gives us 
\begin{equation*}\label{3.4}
 \sum\limits_{n=1}^{k
 	-1}|b_{nk}| <{\infty}. 
\end{equation*} 
Also 
\begin{align*}
	\sum\limits_{n=k}^{\infty}|b_{nk}| =& \sum\limits_{q=0}^{\infty}|b_{k+2q,,k}|\\
									   =& \frac{1}{s_1(\alpha_1^2-1)}\sum_{q=0}^{\infty}\left| \alpha_1^{\frac{k+2q+1}{2}-\frac{k+1}{2}+1}-\alpha^{\frac{k+2q+1}{2}+\frac{k+1}{2}+1}\right| \\
									   =& \frac{1}{s_1(\alpha_1^2-1)}\sum_{q=0}^{\infty}\left| \alpha_1^{q+1}-\alpha_1^{k+q+2}\right|. 
\end{align*}
The inequality $|\alpha_1^{q+1}-\alpha_1^{k+q+2}|\leq|\alpha_1|^{q+1}+|\alpha_1|^{k+q+2}$ and the fact $|\alpha_1|<1$ provides
\[
\sum_{q=0}^{\infty}\left| \alpha_1^{q+1} -\alpha_1^{k+q+2}\right| <\infty, 
\]
which proves $\sum\limits_{n=k}^{\infty}|b_{nk}|<\infty$. Hence we have,
\begin{equation}\label{3.6}
	 \sum_{n=1}^{\infty}|b_{nk}|<\infty, \ \mbox{for odd $k.$}
\end{equation}
Case 2 : ($k$ is even) In this case if $n$ is odd then $b_{nk}=0$. Let $n$ is even and $n=2q$ where $q \in \mathbb{N}.$ Then
\begin{align*}
\sum_{n=1}^{k-1}|b_{nk}|=|b_{2k}|+|b_{4k}|+...+|b_{n-2,k}|
&=\sum_{q=1}^{\frac{k-2}{2}}|b_{2q,k}| \\
& = \frac{1}{s_2(\beta_{1}^2-1)}\sum_{q=1}^{\frac{k-2}{2}}\left| \beta_1^{\frac{k}{2}-q+1}-\beta_{1}^{\frac{k}{2}+q+1}\right|.
\end{align*}
Since $|\beta_1| < 1,$
\begin{align*}
\sum_{q=1}^{\frac{k-2}{2}}\left| \beta_1^{\frac{k}{2}-q+1}-\beta_{1}^{\frac{k}{2}+q+1}\right|
\leq&\left| \beta_1\right|^{\frac{k}{2}+1} \left( \sum_{q=1}^{\frac{k-2}{2}}  
\left|\beta_{1}^{-q}\right| +\sum_{q=1}^{\frac{k-2}{2}}\left| \beta_{1}^{q}\right| \right) \\
=& \left| \beta_{1}\right|^{\frac{k}{2}+1} \left( \frac{\left| \beta_{1}\right|^{1-\frac{k}{2}} -1}{1-\left| \beta_{1}\right|} +\frac{|\beta_{1}|-|\beta_{1}|^{\frac{k}{2}}}{1-|\beta_1|} \right) \\
=& \frac{|\beta_{1}|^{2}-|\beta_{1}|^{\frac{k}{2}+1}+|\beta_{1}|^{\frac{k}{2}+2}-|\beta_{1}|^{k+1}}{1-|\beta_{1}|}
<\infty. 
\end{align*}
Therefore,
\begin{align*} \label{3.7}
 \sum_{n=1}^{k-1}|b_{nk}|<\infty.
\end{align*}
Now,
\begin{align*}
\sum_{n=k}^{\infty}|b_{nk}|=&|b_{kk}|+|b_{k+2,k}|+|b_{k+4,k}|...\\
=&\sum_{p=0}^{\infty}|b_{k+2p,k}|
=\frac{1}{s_2(\beta_{1}^2-1)}
\sum_{p=0}^{\infty}|\beta_{1}^{p+1}-\beta_{1}^{p+k+1}|
<\infty .
\end{align*}
Therefore, 
\begin{equation*}\label{3.8}
\sum_{n=k}^{\infty}|b_{nk}|<\infty.
\end{equation*}
Hence we have,
\begin{align}\label{3.9}
\sum_{n=1}^{\infty}|b_{nk}|<\infty,~~ \text{if $k$ is even.}
\end{align}
By using relations (\ref{3.6}) and (\ref{3.9}) we get,
\[\norm{(T_0-\lambda I)^{-1} }_{1}=  \sup\limits_{k} \sum\limits_{n=1}^{\infty}|b_{nk}| = \sup_k S_k < \infty,~~\text{for all $k$}. \] 
 As the matrix of inverse operator $(T_0-\lambda I)^{-1}$ is symmetric, performing similar calculations to the rows of $B = (b_{nk})$ we get, \[\norm{(T_0-\lambda I)^{-1}}_{\infty}<\infty.\]
 From Lemma \ref{L3.5}, it follows \[(T_0-\lambda I)^{-1}\in B(\ell_p).\]
 Hence it is proved that,
\begin{equation*}\label{1.11}
\sigma(T_0,\ell_p)\subseteq[r_1-2s_1,r_1+2s_1]\cup[r_2-2s_2,r_2+2s_2].
\end{equation*}
For the reverse inclusion relation let $\lambda\in[r_1-2s_1,r_1+2s_1]\cup[r_2-2s_2,r_2+2s_2]$ and $y=(1,1,0,0, \cdots)\in \ell_p$. Define $x = (x_1, x_2, x_3, \cdots)$ by
\[ x=(T_0-\lambda I)^{-1}y.\]
This implies, 
\begin{equation*}
x_n=\begin{cases}
\frac{-\beta_{1}^{\frac{n}{2}}}{s_2},\hspace{.5cm}\text{ if $n$ is even}\\
\frac{-\alpha_1^{\frac{n+1}{2}}}{s_1},\hspace{.5cm}\text{if $n$ is odd}.
\end{cases}
\end{equation*}
Also $\lambda\in [r_1-2s_1,r_1+2s_1]$ implies $|\alpha_1|=1$ which also implies $\frac{-\alpha_1^{\frac{n+1}{2}}}{s_1}\not\to 0$ as $n \to \infty$. Similarly $\lambda\in[r_2-2s_2,r_2+2s_2]$ implies $\frac{-\beta_{1}^{\frac{n}{2}}}{s_2} \not\to0$ as $n \to \infty$. Hence $x \not \in\ell_p$ and $\lambda \in \sigma(T_0,\ell_p)$. This proves the following relation
\begin{equation*}\label{1.13}
[r_1-2s_1,r_1+2s_1]\cup[r_2-2s_
2,r_2+2s_2] \subseteq \sigma(T_0,\ell_p).
\end{equation*}  
Hence, we conclude that
\[\sigma(T_0,\ell_p)=[r_1-2s_1,r_1+2s_1]\cup[r_2-2s_2,r_2+2s_2].\]
\end{proof}

\begin{corollary} 
The continuous spectrum of $T_0$ over $\ell_{p}$ is given by
	$$\sigma_c(T_0,\ell_p)=\left[r_{1}-2s_1, r_1+2s_1\right] \cup [r_2-2s_2, r_2+2s_2].$$
	\end{corollary}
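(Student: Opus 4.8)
The plan is to obtain $\sigma_c(T_0,\ell_p)$ by elimination, using the fact that the fine spectrum is a partition of the spectrum into three pairwise disjoint pieces. Recall from Section 2 that for any $T\in B(X)$ one has the disjoint decomposition $\sigma(T,X)=\sigma_p(T,X)\cup\sigma_c(T,X)\cup\sigma_r(T,X)$. Applied to $T_0$ on $\ell_p$, this yields
\[
\sigma_c(T_0,\ell_p)=\sigma(T_0,\ell_p)\setminus\left(\sigma_p(T_0,\ell_p)\cup\sigma_r(T_0,\ell_p)\right).
\]

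First I would invoke Theorem \ref{t1.2}, which gives $\sigma_p(T_0,\ell_p)=\emptyset$, together with the preceding corollary, which gives $\sigma_r(T_0,\ell_p)=\emptyset$. Substituting both into the displayed identity collapses it to $\sigma_c(T_0,\ell_p)=\sigma(T_0,\ell_p)$. Then I would quote the theorem computing the spectrum of $T_0$, namely $\sigma(T_0,\ell_p)=[r_1-2s_1,r_1+2s_1]\cup[r_2-2s_2,r_2+2s_2]$, to conclude the stated equality.

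There is no real obstacle here: the corollary is a purely bookkeeping consequence of results already established, and the only point requiring a moment's care is that the three-way splitting $\sigma=\sigma_p\cup\sigma_c\cup\sigma_r$ is genuinely into \emph{disjoint} sets, so that the vanishing of both $\sigma_p(T_0,\ell_p)$ and $\sigma_r(T_0,\ell_p)$ forces the entire spectrum $[r_1-2s_1,r_1+2s_1]\cup[r_2-2s_2,r_2+2s_2]$ to lie in $\sigma_c(T_0,\ell_p)$.
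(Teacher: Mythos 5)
Your proposal is correct and follows exactly the paper's own argument: since the fine spectrum decomposes $\sigma(T_0,\ell_p)$ into the disjoint union of $\sigma_p$, $\sigma_r$ and $\sigma_c$, the vanishing of the first two (Theorem \ref{t1.2} and the preceding corollary) forces $\sigma_c(T_0,\ell_p)=\sigma(T_0,\ell_p)=[r_1-2s_1,r_1+2s_1]\cup[r_2-2s_2,r_2+2s_2]$. No gaps; this is the same bookkeeping the paper performs.
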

	\begin{proof}
		It is evident that $\sigma(T_0,\ell_{p})$ is the disjoint union of $\sigma_p(T_0,\ell_{p})$,  $\sigma_r(T_0,\ell_{p})$ and  $\sigma_c(T_0,\ell_{p})$, we have 
		\begin{eqnarray*} 
			\sigma(T_0,\ell_p)=\sigma_c(T_0,\ell_p).
		\end{eqnarray*}
		Hence, $ \sigma_c(T_0,\ell_p)=\left[r_{1}-2s_1, r_1+2s_1\right] \cup [r_2-2s_2, r_2+2s_2]$.
	\end{proof}
	
\begin{corollary}\label{ess}
Essential spectrum of $T_0$ defined over $\ell_{p}$ is given by $$\sigma_{ess}(T_0,\ell_{p})=[r_1-2s_1,r_1+2s_1]\cup[r_2-2s_2,r_2+2s_2].$$
\end{corollary}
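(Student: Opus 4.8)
The plan is to sandwich $\sigma_{ess}(T_0,\ell_p)$ between $\sigma(T_0,\ell_p)$ on the outside and $\sigma_c(T_0,\ell_p)$ on the inside, and then invoke the two preceding results identifying both sets with $[r_1-2s_1,r_1+2s_1]\cup[r_2-2s_2,r_2+2s_2]$. The outer inclusion $\sigma_{ess}(T_0,\ell_p)\subseteq\sigma(T_0,\ell_p)$ is immediate: if $\lambda\notin\sigma(T_0,\ell_p)$ then $T_0-\lambda I$ is boundedly invertible, hence trivially a Fredholm operator of index $0$, so $\lambda\notin\sigma_{ess}(T_0,\ell_p)$.

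For the reverse inclusion I would use that $\sigma_p(T_0,\ell_p)=\emptyset$ and $\sigma_r(T_0,\ell_p)=\emptyset$, so that $\sigma(T_0,\ell_p)=\sigma_c(T_0,\ell_p)$. Fix $\lambda\in\sigma_c(T_0,\ell_p)$. By the definition of the continuous spectrum, $T_0-\lambda I$ is injective but $(T_0-\lambda I)^{-1}$ is unbounded on its domain, i.e. $T_0-\lambda I$ is not bounded below. Since an injective bounded operator on a Banach space has closed range if and only if it is bounded below, $R(T_0-\lambda I)$ is not closed; an operator with non-closed range is not Fredholm, hence $\lambda\in\sigma_{ess}(T_0,\ell_p)$. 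Thus $\sigma(T_0,\ell_p)=\sigma_c(T_0,\ell_p)\subseteq\sigma_{ess}(T_0,\ell_p)$, and together with the outer inclusion the two sets coincide; the claimed formula then follows from the already computed value of $\sigma(T_0,\ell_p)$.

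There is no genuinely hard step here: the argument is purely formal once the spectrum and its fine decomposition are in hand, and it does not interact with the band-matrix structure at all. The only point needing mild care is the closed-range criterion used to pass from a continuous-spectrum value to failure of Fredholmness, but this is a standard fact about injective bounded operators between Banach spaces. (Alternatively, one could argue via $\partial\sigma(T_0,\ell_p)\subseteq\sigma_{app}(T_0,\ell_p)$: since $\sigma(T_0,\ell_p)\subseteq\mathbb{R}$ has empty interior in $\mathbb{C}$, every spectral value is a boundary point, and the injectivity of $T_0-\lambda I$ lets one pass from $\sigma_{app}$ to $\sigma_{ess}$, giving the same conclusion.)
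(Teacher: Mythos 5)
Your argument is correct and is essentially the paper's own: both sandwich $\sigma_{ess}(T_0,\ell_p)$ via the chain $\sigma_{ess}(T_0,\ell_p)\subseteq\sigma(T_0,\ell_p)=\sigma_c(T_0,\ell_p)\subseteq\sigma_{ess}(T_0,\ell_p)$, the paper simply citing the last inclusion as well known while you supply its short proof (non-closed, indeed dense proper, range implies not Fredholm).
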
	
\begin{proof}
It is well-known that $\sigma_c(T_0, \ell_p) \subseteq \sigma_{ess}(T_0,\ell_p)$ and we have $$\sigma_{ess}(T_0,\ell_p)\subseteq \sigma(T_0,\ell_{p}) = \sigma_c(T_0,\ell_{p})\subseteq \sigma_{ess}(T_0, \ell_{p}).$$ Hence, the desired result is obvious.
\end{proof}	
Using the relations which are mentioned in Proposition \ref{p3.1} we can easily obtain the following results.
\begin{corollary}
The compression spectrum, approximate point spectrum and defect spectrum of $T_0$ over $\ell_p$ are as follows
\begin{enumerate}
\item[(i)] $\sigma_{co}(T_0,\ell_p)=\emptyset$,
\item[(ii)]$\sigma_{app}(T_0,\ell_p)=\left[r_{1}-2s_1, r_1+2s_1\right] \cup [r_2-2s_2, r_2+2s_2]$,
\item[(iii)] $\sigma_{\delta}(T_0,\ell_p)=\left[r_{1}-2s_1, r_1+2s_1\right]\cup[r_2-2s_2, r_2+2s_2]$. 
\end{enumerate}
\end{corollary}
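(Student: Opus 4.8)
The plan is to read off all three equalities from Proposition \ref{p3.1}, combined with the facts already established: $\sigma_p(T_0,\ell_p)=\emptyset$ (Theorem \ref{t1.2}), $\sigma_p(T_0^*,\ell_p^*)=\emptyset$ (Corollary \ref{c1.3}), the emptiness of the residual spectrum $\sigma_r(T_0,\ell_p)=\emptyset$, and
\[\sigma(T_0,\ell_p)=\sigma_c(T_0,\ell_p)=[r_1-2s_1,r_1+2s_1]\cup[r_2-2s_2,r_2+2s_2].\]
Nothing new needs to be computed; the argument is pure bookkeeping with these identities.

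For (i), I would apply part (e) of Proposition \ref{p3.1}, which gives $\sigma_{co}(T_0,\ell_p)=\sigma_p(T_0^*,\ell_p^*)$; the right-hand side is empty by Corollary \ref{c1.3}. For (ii), I would use the first equality in part (g), namely $\sigma(T_0,\ell_p)=\sigma_{app}(T_0,\ell_p)\cup\sigma_p(T_0^*,\ell_p^*)$; since $\sigma_p(T_0^*,\ell_p^*)=\emptyset$ this collapses to $\sigma_{app}(T_0,\ell_p)=\sigma(T_0,\ell_p)$, which is exactly the asserted union of intervals.

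For (iii) there are two equally short routes. The direct one: since $\sigma_p(T_0,\ell_p)=\sigma_r(T_0,\ell_p)=\emptyset$, every $\lambda\in\sigma(T_0,\ell_p)$ belongs to $\sigma_c(T_0,\ell_p)$, and by the definition of the continuous spectrum $R(T_0-\lambda I)\neq\ell_p$, so $T_0-\lambda I$ is not surjective; hence $\sigma(T_0,\ell_p)\subseteq\sigma_{\delta}(T_0,\ell_p)\subseteq\sigma(T_0,\ell_p)$. Alternatively, staying inside Proposition \ref{p3.1}: part (c) gives $\sigma_{\delta}(T_0,\ell_p)=\sigma_{app}(T_0^*,\ell_p^*)$, and applying part (g) to the operator $T_0^*$ on the reflexive space $\ell_p$ (so that $(\ell_p^*)^*\cong\ell_p$ and $(T_0^*)^*$ is identified with $T_0$, whence $\sigma_p((T_0^*)^*,(\ell_p^*)^*)=\sigma_p(T_0,\ell_p)=\emptyset$) yields $\sigma_{app}(T_0^*,\ell_p^*)=\sigma(T_0^*,\ell_p^*)$, which equals $\sigma(T_0,\ell_p)$ by part (a).

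I do not anticipate any real obstacle: once the earlier corollaries on the emptiness of $\sigma_p(T_0)$, $\sigma_p(T_0^*)$ and $\sigma_r(T_0)$ are in hand, each part is a one-line consequence of Proposition \ref{p3.1}. The only mild point worth stating explicitly is that (iii) needs either the elementary observation that the continuous spectrum is automatically contained in the defect spectrum, or — if one insists on arguing purely by duality — the reflexivity of $\ell_p$ for $1<p<\infty$, which legitimizes identifying $T_0$ with $T_0^{**}$.
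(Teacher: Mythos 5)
Your proposal is correct and follows exactly the route the paper intends: the paper gives no written proof beyond the remark that the results follow from Proposition \ref{p3.1} together with the previously established facts $\sigma_p(T_0,\ell_p)=\sigma_p(T_0^*,\ell_p^*)=\emptyset$ and $\sigma(T_0,\ell_p)=\sigma_c(T_0,\ell_p)$, which is precisely the bookkeeping you carry out. Your explicit handling of part (iii) — either via the inclusion of the continuous spectrum in the defect spectrum or via reflexivity of $\ell_p$ to identify $(T_0^*)^*$ with $T_0$ — is a careful filling-in of a detail the paper leaves implicit.
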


In particular, if $r_1 = r_2 = r$ and $s_1 = s_2 = s$ then the operator $T_0$ reduces to an operator with the following matrix representation
\[ \begin{pmatrix}
			r & 0 &s&0&0&\cdots &  \\
			0 & r & 0& s&0& \cdots&\\
			s&0&r&0&s&\cdots&\\
			0&s&0&r&0&\cdots&\\
			0&0&s&0&r&\cdots\\
			\vdots  & \vdots  & \vdots &\vdots&\vdots& \ddots
		\end{pmatrix}.
\]
The following results can be obtained from the previously proved results in this setting
\begin{corollary} \label{cor1}
The spectrum and other spectral subdivisions over $\ell_{p}$ are given by,
\begin{enumerate}
\item[(i)] $\sigma(T_0,\ell_p) = \sigma_{c}(T_0,\ell_p) = \sigma_{app}(T_0,\ell_p) = \sigma_{\delta}(T_0,\ell_p) = \left[r-2s, r+2s\right],$
\item[(ii)] $\sigma_p(T_0,\ell_p) = \sigma_{r}(T_0,\ell_p) = \sigma_p(T_0^*,\ell_p^*) = \emptyset.$
\end{enumerate}
\end{corollary}
 It is interesting to note that the spectrum and fine spectra mentioned in Corollary \ref{cor1} coincide with the spectrum and fine spectrum of the tridiagonal matrix $U(s,r,s)$ defined over $\ell_p$ which is obtained in \cite{karakaya2012fine}.

\section{\bf Spectra of $T = T_0 + K$}

In this section, we focus on the spectral properties of the operator $T$ defined over $\ell_p$ which can be expressed as $T=T_0+K$ where $K$ is represented by the following matrix
\begin{equation*}
	K=
	\begin{pmatrix}
		a_1-r_1 & 0 &b_1-s_1&0&0&\cdots &  \\
		0 & a_2-r_2 & 0& b_2-s_2&0& \cdots&\\
		c_1-s_1&0&a_3-r_1&0&b_3-s_1&\cdots&\\
		0&c_2-s_2&0&a_4-r_2&0&\cdots&\\
		0&0&c_3-s_1&0&a_5-r_1&\cdots\\
		\vdots  & \vdots  & \vdots &\vdots&\vdots& \ddots
	\end{pmatrix}.
\end{equation*}
The following result proves the compactness of $K$ on $\ell_{p}$.

\begin{theorem}\label{theorem1.6}
  The operator $K$ is a compact operator on $\ell_{p}$.
 
\end{theorem}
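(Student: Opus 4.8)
The plan is to exhibit $K = T - T_0$ as a finite sum of bounded operators composed with compact diagonal operators, and then to invoke the fact that the compact operators form a closed two-sided ideal in $B(\ell_p)$.

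First I would put $K$ into operator form. Subtracting $T_0 = S_r^2 D_1^\prime + D_2^\prime S_{\ell}^2 + D_3^\prime$ from $T = S_r^2 D_1 + D_2 S_{\ell}^2 + D_3$ gives
\[ K = S_r^2 (D_1 - D_1^\prime) + (D_2 - D_2^\prime) S_{\ell}^2 + (D_3 - D_3^\prime), \]
where $D_1 - D_1^\prime,\, D_2 - D_2^\prime,\, D_3 - D_3^\prime \in \mathcal{D}_p$ have diagonals $\{c_1 - s_1, c_2 - s_2, c_3 - s_1, \ldots\}$, $\{b_1 - s_1, b_2 - s_2, b_3 - s_1, \ldots\}$ and $\{a_1 - r_1, a_2 - r_2, a_3 - r_1, \ldots\}$ respectively, which is consistent with the matrix displayed for $K$. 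The key observation is then that each of these diagonal sequences is a null sequence: its odd-indexed and even-indexed subsequences both tend to $0$ by hypothesis (for instance $a_{2n-1} - r_1 \to 0$ and $a_{2n} - r_2 \to 0$), and a sequence whose even and odd subsequences both converge to $0$ converges to $0$.

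Next I would use the standard fact that a diagonal operator $D$ on $\ell_p$ with a null diagonal $\{d_n\}$ is compact: its truncation $D_N$ retaining only $d_1,\ldots,d_N$ has finite rank and $\norm{D - D_N}_p = \sup_{n>N}|d_n| \to 0$, so $D$ is a norm limit of finite-rank operators. Applying this to $D_1 - D_1^\prime$, $D_2 - D_2^\prime$, $D_3 - D_3^\prime$, and using that $S_r^2, S_{\ell}^2 \in B(\ell_p)$ together with the ideal property (a compact operator composed with a bounded operator is compact, and a sum of compact operators is compact), each of the three summands above is compact, hence so is $K$.

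I do not expect a genuine obstacle; the only points needing (routine) care are the elementary fact about interleaved null subsequences and the approximation argument for null-diagonal operators. If one prefers to argue directly with the matrix of $K$, the same conclusion follows by truncating $K$ to its $N \times N$ upper-left corner $K_N$, which is finite rank: since $K$ is a band matrix of bandwidth $2$, every entry of $K - K_N$ lies in a row or column of index $\ge N - 1$, so estimating $\norm{(K - K_N)x}_p$ by Jensen's inequality exactly as in the norm bound for $T_0$ yields $\norm{K - K_N}_p \le 3\,\varepsilon_N$, where $\varepsilon_N$ is the supremum of the moduli of the entries of $K$ in rows or columns of index $\ge N-1$ and $\varepsilon_N \to 0$; alternatively Lemmas \ref{l1}, \ref{l2} and \ref{L3.5} give boundedness of these truncations on $\ell_1$ and $\ell_\infty$ and one interpolates.
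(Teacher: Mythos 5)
Your proposal is correct. Your main argument, however, takes a genuinely different route from the paper's. The paper works directly with the matrix of $K$: it defines truncations $K_n$ that retain the first $n$ coordinates of $Kx$, and estimates $\norm{(K-K_n)x}_p$ by Minkowski's inequality to get $\norm{K-K_n}_p \le \sup_{k\ge n-1}|w_k|+\sup_{k\ge n-1}|u_k|+\sup_{k\ge n-1}|v_k| \to 0$, so that $K$ is a norm limit of compact operators. You instead exploit the operator-level decomposition $K = S_r^2(D_1-D_1') + (D_2-D_2')S_\ell^2 + (D_3-D_3')$, observe that each diagonal factor has a null diagonal (by the interleaved-subsequence argument) and is therefore compact, and conclude via the two-sided ideal property of the compact operators. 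Your route is cleaner and more conceptual: it isolates the compactness in the diagonal factors, reduces the analysis to the elementary fact $\norm{D-D_N}_p=\sup_{n>N}|d_n|$, and avoids any explicit $\ell_p$ estimate on the banded matrix. The paper's route is more self-contained at the level of matrix entries and generalizes mechanically to band matrices that do not admit such a tidy shift-times-diagonal factorization. The alternative you sketch at the end (truncating $K$ itself and estimating the tail) is essentially the paper's proof, so you have in effect covered both arguments.
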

\begin{proof}
	The operator $K$ on $\ell_{p}$ can be represented by the following infinite matrix
	 \begin{equation*}
		K=
		\begin{pmatrix}
			u_1 & 0 &v_1&0&0&\cdots &  \\
			0 & u_2 & 0& v_2&0& \cdots&\\
			w_1&0&u_3&0&v_3&\cdots&\\
			0&w_2&0&u_4&0&\cdots&\\
			\vdots  & \vdots  & \vdots &\vdots&\vdots& \ddots
		\end{pmatrix},
	\end{equation*}
where $\{u_n\}$, $\{v_n\}$ and $\{w_n\}$ are null sequences, which are defined as follows,
 
\begin{equation*}
	u_n=\begin{cases*}
		a_n-r_1,~~ \mbox{$n$ is odd}\\
		a_n-r_2, ~~ \mbox{$n$ is even},
	\end{cases*}~~~~\qquad
	v_n=\begin{cases*}
		b_n-s_1,~~ \mbox{ $n$ is odd}\\
		b_n-s_2,~~ \mbox{$n$ is even}
	\end{cases*}
\end{equation*}
and
\begin{equation*}
w_n=\begin{cases*}
	c_n-s_1,~~ \mbox{ $n$ is odd}\\
	c_n-s_2,~~ \mbox{$n$ is even}.
\end{cases*}
\end{equation*}
Let $x=\{x_1,x_2,x_3...\}\in \ell_p$. 
 We construct a sequence of compact operators $\left\lbrace K_n\right\rbrace $ such that for $i \in \mathbb{N}$, \\
\begin{equation*}
(K_n(x))_i=\begin{cases}
	(Kx)_i,~~ i=1,2,...n\\
	0,~~ \text{otherwise.}
\end{cases}	
\end{equation*}
For $n \geq2$,
\begin{align*}
\norm{(K-K_n)x}_{p}
= & \left( \sum_{k=n-1}^{\infty}|w_kx_k+u_{k+2}x_{k+2}+v_{k+2}x_{k+4}|^p\right) ^{\frac{1}{p}}\\
\leq & \left( \sup_{k\geq n-1}|w_k|\right)\norm{x}_p + \left( \sup_{k \geq n-1}|u_k|\right)\norm{x}_p+ \left( \sup_{k\geq n-1}|v_k|\right) \norm{x}_p.
\end{align*} 
This implies,
$$ \|{K-K_n}\|_p ~\leq  \sup_{k \geq n-1}|w_k|+\sup_{k \geq n-1 }|u_k|+\sup_{k \geq n-1}|v_k|.$$
Thus, $\{K_n\}$ converges to $K$ as $n\to\infty$ in operator norm and hence $K$ is a compact operator over $\ell_p$.
\end{proof}
Hence, the operator $T$ is a compact perturbation of $T_0$ and since $T_0 \in B(\ell_p)$, $K$ is compact on $\ell_p,$ the operator $T$ is an bounded linear operator on $\ell_p.$

Next we derive an inclusion relation between $\sigma(T_0, \ell_{p})$ and $\sigma(T,\ell_{p})$ and to prove this result we require the following Lemma.
\begin{lemma}\cite[p.373]{gohberg2013classes} \label{lemma4}
	Let $T : X\rightarrow X$ be an operator with a non-empty resolvent set, and let $\Omega$	be an open connected subset of $\mathbb{C}\setminus \sigma_{ess}(T)$. If $\Omega \cap \rho(T) \neq \emptyset$ then $\sigma(T)\cap \Omega$
	is a finite or countable set, with no accumulation point in $\Omega$, consisting of eigenvalues of $T$ of finite type.
\end{lemma} 
\begin{theorem}\label{t4.11}
The spectrum of $T$ over $\ell_{p}$ satisfies the following inclusion relation $$\sigma(T_0,\ell_p) \subseteq \sigma(T,\ell_p)$$ and $\sigma(T,\ell_p)\setminus \sigma(T_0,\ell_p)$ contains finite or countable number of eigenvalues of $T$ of finite type with no accumulation point in $\sigma(T,\ell_p)\setminus \sigma(T_0,\ell_p)$.  
\end{theorem}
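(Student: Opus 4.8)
The plan is to exploit the decomposition $T = T_0 + K$ with $K$ compact (Theorem \ref{theorem1.6}) together with the standard perturbation theory for the essential spectrum. Since compact perturbation preserves Fredholmness and index, we have $\sigma_{ess}(T,\ell_p) = \sigma_{ess}(T_0,\ell_p) = [r_1-2s_1,r_1+2s_1]\cup[r_2-2s_2,r_2+2s_2] = \sigma(T_0,\ell_p)$, where the last equality uses Corollary \ref{ess} and the fact that $\sigma(T_0,\ell_p)=\sigma_c(T_0,\ell_p)$. The first inclusion $\sigma(T_0,\ell_p)\subseteq\sigma(T,\ell_p)$ then follows immediately, since $\sigma_{ess}(T,\ell_p)\subseteq\sigma(T,\ell_p)$ always, and $\sigma_{ess}(T,\ell_p)=\sigma(T_0,\ell_p)$.

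For the second part, I would invoke the classical structure theorem for the complement of the essential spectrum (the Riesz--Schauder / analytic Fredholm theory statement, e.g. as in Gohberg--Goldberg--Kaashoek or Kato): if $\Omega$ is a connected component of $\mathbb{C}\setminus\sigma_{ess}(T,\ell_p)$, then either $\Omega\subseteq\sigma(T,\ell_p)$, or $\sigma(T,\ell_p)\cap\Omega$ consists of at most countably many isolated eigenvalues of finite algebraic multiplicity (i.e. of finite type), with no accumulation point inside $\Omega$ — accumulation can only occur on $\partial\Omega\subseteq\sigma_{ess}(T,\ell_p)$. The key input making the first alternative impossible here is that $T-\lambda I$ is a Fredholm operator of index zero on $\mathbb{C}\setminus\sigma_{ess}(T,\ell_p)$: indeed $T_0-\lambda I$ is invertible (hence index $0$) for $\lambda\notin\sigma(T_0,\ell_p)$, and adding the compact operator $K$ does not change the index, so $\mathrm{ind}(T-\lambda I)=0$ throughout $\mathbb{C}\setminus\sigma_{ess}(T,\ell_p)$. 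On the unbounded component, $\|T\|_p<\infty$ guarantees that all sufficiently large $\lambda$ lie in $\rho(T,\ell_p)$, so that component is not contained in $\sigma(T,\ell_p)$; combined with index zero and the analytic Fredholm alternative, the same conclusion propagates to every component of $\mathbb{C}\setminus\sigma_{ess}(T,\ell_p)$. Therefore $\sigma(T,\ell_p)\setminus\sigma(T_0,\ell_p) = \sigma(T,\ell_p)\setminus\sigma_{ess}(T,\ell_p)$ consists of isolated eigenvalues of finite type.

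Concretely, the steps in order are: (1) recall $\sigma_{ess}(T,\ell_p)=\sigma_{ess}(T_0,\ell_p)$ from compactness of $K$; (2) identify $\sigma_{ess}(T_0,\ell_p)=\sigma(T_0,\ell_p)$ via Corollary \ref{ess}; (3) deduce $\sigma(T_0,\ell_p)\subseteq\sigma(T,\ell_p)$; (4) observe $\mathrm{ind}(T-\lambda I)=0$ on the Fredholm region; (5) apply the analytic Fredholm theorem on each component of the complement, using boundedness of $T$ to rule out the unbounded component being spectral, and hence all components; (6) conclude that points of $\sigma(T,\ell_p)\setminus\sigma(T_0,\ell_p)$ are discrete eigenvalues of finite type with accumulation points only in $\sigma(T_0,\ell_p)$. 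The main obstacle is step (5): one must carefully state the analytic Fredholm alternative in a form that yields the "finite or countable, no accumulation point in the complement" conclusion, and argue that index zero plus one regular point per component forces $T-\lambda I$ to be invertible off a discrete set — this is where the bulk of the cited machinery is needed, and where I would reference a standard text (e.g. \cite{basar2012summability} or a Fredholm-theory monograph) rather than reproving it.
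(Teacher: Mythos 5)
Your overall route is the standard one and, as far as one can tell, the same as the paper's: use compactness of $K$ (Theorem \ref{theorem1.6}) to get $\sigma_{ess}(T,\ell_p)=\sigma_{ess}(T_0,\ell_p)=\sigma(T_0,\ell_p)$ via Corollary \ref{ess}, deduce the inclusion, and then invoke Riesz--Schauder/analytic Fredholm theory off the essential spectrum. Steps (1)--(4) and (6) are fine.

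The one soft spot is your step (5). The claim that ``index zero plus one regular point per component'' settles every component is circular as stated: you only exhibit a regular point (large $|\lambda|$) in the \emph{unbounded} component, and index zero alone does not rule out the first alternative of the analytic Fredholm dichotomy on a bounded component --- a Fredholm operator of index zero with nontrivial kernel is perfectly possible at every point of such a component, in which case the whole component would consist of eigenvalues. The gap closes for a reason you do not mention: $\sigma(T_0,\ell_p)=[r_1-2s_1,r_1+2s_1]\cup[r_2-2s_2,r_2+2s_2]$ is a compact subset of the real line, so $\mathbb{C}\setminus\sigma(T_0,\ell_p)$ is \emph{connected} (one can pass between any two points through the upper or lower half-plane). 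Hence there is exactly one component, it is unbounded, it contains resolvent points of $T$, and a single application of the analytic Fredholm alternative --- most cleanly to $\lambda\mapsto I+(T_0-\lambda I)^{-1}K$, writing $T-\lambda I=(T_0-\lambda I)\left(I+(T_0-\lambda I)^{-1}K\right)$ on $\rho(T_0,\ell_p)$ --- yields that $\sigma(T,\ell_p)\setminus\sigma(T_0,\ell_p)$ is an at most countable set of eigenvalues of finite type with no accumulation point in $\rho(T_0,\ell_p)$. With that connectedness observation inserted, your argument is complete.
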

\begin{proof}
Suppose $\lambda \not \in\sigma(T, \ell_p)=\sigma(T_0+K, \ell_p)$. This implies $(T_0+K-\lambda I)^{-1}$ exists and belongs to $B(\ell_p)$. Then there exists $U \in B(\ell_{p})$ such that $(T_0+K-\lambda I)U=I$. Hence,
\begin{equation}\label{T_0}
 KU-I=-(T_0-\lambda I)U
\end{equation} 
 and  $(KU-I)x=0$ implies $(T_0-\lambda I)Ux=0$. This gives us $Ux \in N(T_0-\lambda I)=\{0\}$ as $\sigma(T_0, \ell_{p})=\emptyset$. Therefore $x=0$ and consequently $1 \not \in \sigma_p(KU)$.
As $K$ is compact operator, $KU$ is also a compact operator and it follows that $1 \not \in \sigma(KU)$. Hence, $(KU-I)$ is invertible and consequently $(T_0-\lambda I)$ is invertible by equation (\ref{T_0}). This implies, $\lambda \not \in \sigma(T_0)$ and $\sigma(T_0)\subseteq \sigma(T)$.
\\
For the second part we have,  $\rho(T)$ is non-empty as $T$ is a bounded linear operator and $\sigma_{ess}(T,\ell_p)=\sigma_{ess}(T_0,\ell_p)$. Let $\Omega=\mathbb{C}\setminus \sigma(T_0,\ell_p)$ then $\Omega \cap \rho(T,\ell_p)\neq \emptyset$. The set $\Omega$ is open connected subset of $\mathbb{C}\setminus \sigma_{ess}(T_0,\ell_p) = \mathbb{C}\setminus \sigma_{ess}(T,\ell_p)$. Then by using Lemma \ref{lemma4}  we have, $\sigma(T,\ell_p)\cap \Omega$ is a finite or countable set with no accumulation point in $\Omega$ consisting eigenvalues of finite type. 
\end{proof}

\begin{corollary}\label{ess1}
	$\sigma_{ess}(T, \ell_{p})=\sigma(T_0, \ell_{p})=[r_1-2s_1,r_1+2s_1]\cup[r_2-2s_2,r_2+2s_2]$.
\end{corollary}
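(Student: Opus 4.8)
The plan is to derive Corollary~\ref{ess1} as an immediate consequence of the compactness of $K$ established in Theorem~\ref{theorem1.6}, together with the value of $\sigma(T_0,\ell_p)$ and its identification with $\sigma_{ess}(T_0,\ell_p)$ recorded in Corollary~\ref{ess}. The essential ingredient is the stability of the essential spectrum under compact perturbations, which was already noted in the Preliminaries: if $K$ is compact then $\sigma_{ess}(T_0,\ell_p)=\sigma_{ess}(T_0+K,\ell_p)$.

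First I would invoke Theorem~\ref{theorem1.6} to write $T=T_0+K$ with $K$ compact on $\ell_p$. Then, applying the invariance of the essential spectrum under compact perturbation to the pair $T_0$ and $T=T_0+K$, I obtain
\[
\sigma_{ess}(T,\ell_p)=\sigma_{ess}(T_0+K,\ell_p)=\sigma_{ess}(T_0,\ell_p).
\]
Next I would substitute the already-computed value from Corollary~\ref{ess}, namely $\sigma_{ess}(T_0,\ell_p)=[r_1-2s_1,r_1+2s_1]\cup[r_2-2s_2,r_2+2s_2]$, which in turn equals $\sigma(T_0,\ell_p)$ by the preceding theorems on $T_0$. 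Chaining these equalities yields exactly
\[
\sigma_{ess}(T,\ell_p)=\sigma(T_0,\ell_p)=[r_1-2s_1,r_1+2s_1]\cup[r_2-2s_2,r_2+2s_2],
\]
which is the claimed statement.

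There is really no substantive obstacle here: the corollary is a bookkeeping consequence of results already in hand. The only point deserving a word of care is making explicit that the Fredholm-index argument of the Preliminaries applies symmetrically, so that not only does adding a compact operator to a Fredholm operator preserve Fredholmness (giving $\sigma_{ess}(T_0+K)\subseteq\sigma_{ess}(T_0)$ is false as stated — rather $\sigma_{ess}(T_0+K)=\sigma_{ess}(T_0)$ because $T_0=(T_0+K)+(-K)$ and $-K$ is again compact), but the reverse inclusion holds by the same reasoning with the roles reversed. Once that symmetry is spelled out, the proof is complete in a couple of lines.
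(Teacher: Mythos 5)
Your proposal is correct and follows exactly the paper's own route: compactness of $K$ from Theorem~\ref{theorem1.6}, invariance of the essential spectrum under compact perturbation, and substitution of the value from Corollary~\ref{ess}. The extra remark on the symmetry of the perturbation argument (writing $T_0=(T_0+K)+(-K)$) is a sensible clarification but does not change the substance.
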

\begin{proof}
	Since a compact perturbation does not effect the Fredholmness and index of a Fredholm operator, it follows $\sigma_{ess}(T_0,\ell_{p})=\sigma_{ess}(T,\ell_{p})$. Hence, by using Corollary \ref{ess}, we have 
	\begin{equation*}
		\sigma_{ess}(T, \ell_{p})=\sigma(T_0, \ell_{p})=[r_1-2s_1,r_1+2s_1]\cup[r_2-2s_2,r_2+2s_2].
	\end{equation*}
\end{proof}
We now focus on the point spectrum of $T$ on $\ell_{p}$. First we analyze the eigenvalues of $T$ lying in $\sigma_{ess}(T, \ell_{p})=\sigma(T_0, \ell_p)$, in particular we derive sufficient conditions for the absence of point spectrum on $\sigma_{ess}(T, \ell_p)$. In Theorem \ref{poT}, sufficient conditions are provided in terms of the rate of convergence of the sequences $\{a_{2n-1}\}, \ \{a_{2n}\},  \ \{b_{2n-1}\}, \ \{b_{2n}\}, \ \{c_{2n-1}\}$ and $\{c_{2n}\}$. Sufficient conditions of absence of point spectrum on $\sigma_{ess}(T, \ell_{p})$ are also provided in Theorem \ref{sc} in terms of the entries of the matrix $T$.

\begin{theorem}\label{poT}
If the convergence of the sequences $\{a_{2n-1}\}, \ \{a_{2n}\}, \ \{b_{2n-1}\}, \ \{b_{2n}\}, \\ \{c_{2n-1}\}$ and $\{c_{2n}\}$ are exponentially fast then $$\sigma_{ess}(T, \ell_{p}) \cap \sigma_{p}(T,\ell_{p})= \emptyset.$$
\end{theorem}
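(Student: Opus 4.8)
The plan is to reduce the eigenvalue equation $Tx=\lambda x$ for $\lambda\in\sigma_{ess}(T,\ell_p)$ to a pair of scalar second-order difference equations with coefficients that are asymptotically constant, and then invoke a Poincar\'e–Perron type result to control the asymptotics of the solutions. Writing $y_n = x_{2n-1}$ and $z_n = x_{2n}$ exactly as in the proof of Theorem \ref{t1.2}, the system $Tx=\lambda x$ decouples into
\begin{equation*}
c_{2n-1}\,y_n + (a_{2n+1}-\lambda)\,y_{n+1} + b_{2n+1}\,y_{n+2} = 0,
\qquad
c_{2n}\,z_n + (a_{2n+2}-\lambda)\,z_{n+1} + b_{2n+2}\,z_{n+2} = 0,
\end{equation*}
for $n\ge 0$ (with the convention $y_0=z_0=0$ absorbing the first rows). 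Dividing the first equation by $b_{2n+1}$ (nonzero for large $n$ since $b_{2n+1}\to s_1\ne 0$) and the second by $b_{2n+2}$, each becomes $w_{n+2} + \tilde p_n w_{n+1} + \tilde q_n w_n = 0$ where $\tilde p_n\to p_1=(r_1-\lambda)/s_1$, $\tilde q_n\to 1$ (respectively $p_2=(r_2-\lambda)/s_2$, $1$). Thus both equations are perturbations of the \emph{constant}-coefficient equations \eqref{3.1}, \eqref{3.2} whose characteristic roots lie on the unit circle when $\lambda\in\sigma(T_0,\ell_p)$.

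The key step is to control the solutions in the borderline case $|\alpha_1|=|\alpha_2|=1$, where the unperturbed solutions are bounded but not decaying, so membership in $\ell_p$ is a delicate matter. Here the exponential rate of convergence hypothesis is exactly what is needed: if $a_{2n-1}-r_1$, $b_{2n-1}-s_1$, $c_{2n-1}-s_1$ (and the even-indexed analogues) all decay like $O(\theta^n)$ for some $\theta\in(0,1)$, then the coefficient perturbations $\tilde p_n - p_1$, $\tilde q_n - 1$ are summable — in fact exponentially small — so by the Levinson/Benzaid–Lutz asymptotic theorem for perturbed difference systems there is a fundamental system of solutions asymptotic to the unperturbed ones: $w_n \sim \alpha_i^{\,n}$ (or $\sim n$, $\sim (-1)^n n$ in the degenerate root cases $p=\mp2$). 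Consequently every solution $\{y_n\}$ of the perturbed recurrence satisfies $|y_n|\asymp 1$ or grows, hence cannot lie in $\ell_p$ unless it is identically zero; the same holds for $\{z_n\}$. Since a nonzero solution of $Tx=\lambda x$ forces at least one of $\{y_n\},\{z_n\}$ to be nonzero, no such $x$ lies in $\ell_p$, giving $\lambda\notin\sigma_p(T,\ell_p)$.

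I expect the main obstacle to be the careful bookkeeping in applying the asymptotic-integration theorem: one must verify the dichotomy/non-resonance condition on the eigenvalues of the limiting companion matrix (the two roots $\alpha_1,\alpha_2$ with $\alpha_1\alpha_2=1$ have equal modulus $1$, so Levinson's theorem does not apply directly and one needs the finer Benzaid–Lutz result, or a direct variation-of-constants estimate using summability of the perturbation) and separately dispose of the three cases $p_i=2$, $p_i=-2$, $p_i\notin\{-2,2\}$ — in the first two cases the unperturbed solutions already grow linearly, so one only needs that the perturbed solutions retain at least one unbounded (or at least non-$\ell_p$) direction, which again follows from summability of the perturbation via a Gronwall-type argument. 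A secondary point to handle cleanly is that the two endpoints where $\sigma(T_0,\ell_p)$-intervals might force $p_1$ or $p_2$ to hit $\pm2$ must be treated, but these are finitely many $\lambda$ and the argument above covers them.
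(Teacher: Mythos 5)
Your proposal is correct and follows essentially the same route as the paper: decoupling $Tx=\lambda x$ into the two three-term recurrences for $y_n=x_{2n-1}$ and $z_n=x_{2n}$, observing that exponential convergence of the entries makes these summable (indeed exponentially small) perturbations of the constant-coefficient equations \eqref{3.1}--\eqref{3.2}, and invoking discrete asymptotic-integration results to conclude that no nontrivial solution decays fast enough to lie in $\ell_p$. The obstacles you flag (the equal-modulus roots on the unit circle requiring the $\ell_1$-perturbation version rather than plain Poincar\'e--Perron, the degenerate cases $p_i=\pm 2$, and the fact that a nonzero combination $c_1\alpha_1^n+c_2\alpha_2^n$ of unimodular geometric sequences cannot tend to zero) are exactly the points the paper's argument must also address, and your sketch of how to handle them is sound.
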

\begin{proof}
	Let $\lambda \in \sigma_{ess}(T, \ell_{p})$.
 The equation $Tx= \lambda x$ for some $\lambda \in \mathbb{C} $ reduces to the following system
\begin{eqnarray*}
	\begin{rcases}
     a_1x_1+b_1x_3&=\lambda x_1\\
	 a_2x_2+b_2x_4&=\lambda x_2\\
	 c_1x_1+a_3x_3+b_3x_5&=\lambda x_3\\
	 c_2x_2+a_4x_4+b_4x_6&=\lambda x_4\\
                	 &\vdots 
     \end{rcases}  .         	
\end{eqnarray*}
If we separate the odd and even terms of the sequences $\{a_n\},$ $\{b_n\}$ and $\{c_n\}$, the above system of equations can also be expressed as
\begin{equation} \label{point1}
\begin{rcases}
c_{2n-1}x_{2n-1}+(a_{2n+1}-\lambda) x_{2n+1}+b_{2n+1}x_{2n+3}=0,\\
c_{2n}x_{2n}+(a_{2n+2}- \lambda)x_{2n+2}+b_{2n+2}x_{2n+4}=0,
\end{rcases}
\end{equation}
where $n \in \mathbb{N}$ with the initial conditions
\begin{equation} \label{point2}
\begin{rcases}
a_1x_1+b_1x_3=\lambda x_1,\\
a_2x_2+b_2x_4=\lambda x_2.
\end{rcases}
\end{equation}
Introducing two sequences $\{y_n\}$ and $\{z_n\}$ such that $y_n = x_{2n-1}$ and $z_n = x_{2n}$ for $n \in \mathbb{N},$ the system (\ref{point1}) with the initial conditions (\ref{point2}) reduces to
\begin{equation} \label{point_diff}
c_{2n-1}y_n+(a_{2n+1}-\lambda) y_{n+1}+b_{2n+1}y_{n+2}=0,
\end{equation}
\begin{equation}\label{point_diff1}
	c_{2n}z_n+(a_{2n+2}- \lambda)z_{n+1}+b_{2n+2}z_{n+2}=0,
\end{equation}
where $n \in \mathbb{N} \cup \{0\}$ with $y_0 = z_0 = 0.$ Using the assumed convergence of the sequences $\{a_{2n-1}\}$, $\{a_{2n}\},$ $\{b_{2n-1}\},  \{b_{2n}\}$, $\{c_{2n-1}\}$, and $\{c_{2n}\},$ the characteristic polynomials of the difference equations (\ref{point_diff}) and (\ref{point_diff1}) are
 \begin{eqnarray}
&& t^2+p_1t+1=0 ,\label{point_poly1} \\
&& t^2+p_2t+1=0,  \label{point_poly2}
 \end{eqnarray}
where $p_1 = \frac{r_1 - \lambda}{s_1}$ and $p_2= \frac{r_2 - \lambda}{s_2}$. Let $\mu_1, \ \mu_2$ and $\gamma_1, \ \gamma_2$ are the pair of roots of equations (\ref{point_poly1}) and (\ref{point_poly2}) respectively. Also we have $$\lambda \in \sigma_{ess}(T,\ell_p)=\sigma(T_0,\ell_p)=[r_1-2s_1,r_1+2s_1]\cup[r_2-2s_2,r_2+2s_2].$$
 Let $\lambda \in [r_1-2s_1,r_1+2s_1]$ i.e., $p_1 \in [-2,2]$. Then the roots $\mu_1$, $\mu_2$ satisfies $|\mu_1|=1$ and $|\mu_2|=1$.
Since the convergence of $\{a_{2n-1}\}, \{b_{2n-1}\}, \{c_{2n-1}\}$ are exponentially fast, if $\{y_n\}$ is a solution of equation   (\ref{point_diff}) then by Theorem 2.3 \cite{agarwal2007asymptotic}, it can be deduced that, either $y_n=0$ for large $n$ or there exists $\rho \in (0,1)$ such that 
\begin{equation}\label{t4.8}
	y_n=y_n'+O((1-\rho)^n), ~~ \quad \mbox{for large $n$} 
\end{equation}
where $\{y_n'\}$ is a solution of limiting equation of (\ref{point_diff}) which is given by $$y_{n}+p_1y_{n+1}+y_{n+2}=0,$$
and the solution of this limiting equation is already obtained in Case 1, Case 2, Case 3 of Theorem \ref{t1.2}.
Based on equation (\ref{t4.8}), there exists an $M>0$ such that 
\begin{equation*}
	|y_n-y_n'|\leq M(1-\rho)^n.
\end{equation*}
Thus,
\begin{equation*}
|y_n'|^p\leq(|y_n|+M(1-\rho)^n)^p.
\end{equation*}
Applying Jensen's inequality we get,
\begin{equation*}
	|y_n'|^p\leq 2^{(p-1)}(|y_n|^p+M^p(1-\rho)^{np}).
\end{equation*}
As $|\mu_1|=1$ and $|\mu_2|=1$, from Theorem \ref{t1.2} we have $\{y_n'\} \not \in \ell_{p}$. Also $0< 1- \rho <1$ implies, $\{y_n\} \not \in \ell_{p}$. Hence, $\lambda \not \in \sigma_p(T,\ell_{p})$.
In a similar way, if $\lambda \in [r_2-2s_2,r_2+2s_2]$ we can obtain that 
$\lambda \not\in \sigma_p(T,\ell_{p})$.
Hence the desired result is proved.
\end{proof}

In the next theorem, we apply transfer matrix approach as discussed in \cite{janas2003similarity, janas1998point} . This enables us to examine the sufficient condition for the absence of point spectrum in essential spectrum of $T$ in terms of the entries of matrix $T$.
\begin{theorem}\label{sc}
	If \(\lambda \in \sigma_{ess}(T,\ell_{p})\) satisfies either of the following conditions\\
	(i) $ \sum_{n=1}^{\infty} \prod_{j=1}^n \left[\frac{1}{2} \left(P_j(\lambda)-\sqrt{P_j(\lambda)^2-\left| \frac{2c_{2j-1}}{b_{2j+1}}\right|^2}\right) \right]^{\frac{p}{2}}=+\infty$ \\
	\\ or\\
	(ii) $ \sum_{n=1}^{\infty} \prod_{j=1}^n \left[\frac{1}{2} \left( Q_j(\lambda)-\sqrt{Q_j(\lambda)^2-\left| \frac{2c_{2j}}{b_{2j+2}}\right|^2}\right) \right]^{\frac{p}{2}}=+\infty$\\
	where,
	$$  P_j(\lambda)= \left| \frac{c_{2j-1}}{b_{2j+1}}\right| ^2 +\left| \frac{a_{2j+1}-\lambda}{b_{2j+1}}\right|^2+1,~~Q_j(\lambda)= \left| \frac{c_{2j}}{b_{2j+2}}\right| ^2 +\left| \frac{a_{2j+2}-\lambda}{b_{2j+2}}\right|^2+1, $$
	then \(\lambda \notin \sigma_{p}(T,\ell_{p})\).
\end{theorem}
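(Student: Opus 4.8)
The plan is to transport the eigenvalue equation $Tx=\lambda x$ to a pair of scalar three‑term recurrences and to analyse them by the transfer‑matrix method of \cite{janas2003similarity,janas1998point}. Assume, towards a contradiction, that $\lambda\in\sigma_{ess}(T,\ell_p)$ is an eigenvalue with eigenvector $x=\{x_n\}\in\ell_p\setminus\{0\}$. Reading off the rows of $T$ (the $n$‑th row is $c_{n-2}x_{n-2}+a_nx_n+b_nx_{n+2}=\lambda x_n$, with the convention $x_k=0$ for $k\le0$) and splitting into odd and even subsequences $y_n=x_{2n+1}$, $z_n=x_{2n+2}$ as in Theorem \ref{t1.2}, the system $Tx=\lambda x$ decouples into
\[
c_{2n-1}y_{n-1}+(a_{2n+1}-\lambda)y_n+b_{2n+1}y_{n+1}=0,\qquad
c_{2n}z_{n-1}+(a_{2n+2}-\lambda)z_n+b_{2n+2}z_{n+1}=0.
\]
Since $x\neq0$, at least one of $\{y_n\}$, $\{z_n\}$ is a nonzero element of $\ell_p$, and these two alternatives are precisely what hypotheses (i) and (ii) address; I would treat $\{y_n\}$ and remark that $\{z_n\}$ is handled verbatim after the substitutions $P_j\mapsto Q_j$, $b_{2j+1}\mapsto b_{2j+2}$, $c_{2j-1}\mapsto c_{2j}$.

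Write the $y$‑recurrence as $v_{n+1}=B_nv_n$ with $v_n=(y_n,y_{n-1})^{\mathsf T}$ and
\[
B_n=\begin{pmatrix}-\dfrac{a_{2n+1}-\lambda}{b_{2n+1}} & -\dfrac{c_{2n-1}}{b_{2n+1}}\\[2mm] 1 & 0\end{pmatrix}.
\]
Since $b_{2n+1}\to s_1\neq0$ and $c_{2n-1}\to s_1\neq0$, there is an $N_0$ with $B_n$ invertible for all $n\ge N_0$, hence $v_n\neq0$ for $n\ge N_0$ (a vanishing $v_n$ would, running the recurrence in both directions, force $y\equiv0$). The algebraic heart is that the quantities in the statement are the singular‑value data of $B_n$: one checks $\|B_n\|_{HS}^2=\mathrm{tr}(B_n^{*}B_n)=P_n(\lambda)$ and $|\det B_n|^2=|c_{2n-1}/b_{2n+1}|^2$, so the smaller eigenvalue of $B_n^{*}B_n$ is
\[
\sigma_{\min}(B_n)^2=\tfrac12\Big(P_n(\lambda)-\sqrt{P_n(\lambda)^2-4\,|c_{2n-1}/b_{2n+1}|^2}\Big),
\]
i.e. exactly the bracketed factor occurring in (i). Thus hypothesis (i) reads $\sum_{n\ge1}\prod_{j=1}^{n}\sigma_{\min}(B_j)=+\infty$, and similarly (ii) reads $\sum_{n\ge1}\prod_{j=1}^{n}\sigma_{\min}(\widetilde B_j)=+\infty$ for the even‑index transfer matrices.

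The iterated contraction bound $\|v_{n+1}\|\ge\sigma_{\min}(B_n)\|v_n\|$ yields $\|v_{n+1}\|\ge\big(\prod_{j=N_0}^{n}\sigma_{\min}(B_j)\big)\|v_{N_0}\|$; if these partial products do not tend to $0$, then $\|v_n\|$ is bounded below, which already contradicts $\{y_n\}\in\ell_p\subset c_0$. For the remaining case — partial products tending to $0$ while their sum still diverges — one has to go beyond this one‑step estimate, because for companion matrices converging to an elliptic limit the most‑contracted direction rotates from step to step and the submultiplicative bound $\sigma_{\min}(B_n\cdots B_{N_0})\ge\prod\sigma_{\min}(B_j)$ is too lossy. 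Here I would follow the subordinacy analysis of \cite{janas1998point,janas2003similarity}: introduce a second, linearly independent solution $\{\hat v_n\}$, use the Wronskian relation $\det[\,v_{n+1}\mid\hat v_{n+1}\,]=(\det B_n)\det[\,v_n\mid\hat v_n\,]$ together with $|\det[\,v_n\mid\hat v_n\,]|\le\|v_n\|\,\|\hat v_n\|$ to bound $\|\hat v_n\|$ from below by a multiple of $\|v_n\|^{-1}\prod_{j<n}|c_{2j-1}/b_{2j+1}|$, and then play this lower bound on the "growing" solution against the $\ell_p$‑decay of $\{v_n\}$ and the iterated singular‑value estimate to force $\sum_n\prod_{j\le n}\sigma_{\min}(B_j)<+\infty$, contradicting (i). Therefore no nonzero $\ell_p$‑solution exists and $\lambda\notin\sigma_p(T,\ell_p)$.

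I expect the genuine obstacle to be exactly this last step. Passing from "$\{v_n\}\in\ell_p$" to the divergent‑series criterion cannot be done by the crude product bound alone — for $p>1$ a sequence can lie in $\ell_p$ while the partial products $\prod_{j\le n}\sigma_{\min}(B_j)$ are only summable‑to‑infinity — so one must exploit the full two‑dimensional geometry of the iteration (the Wronskian identity, the angle between $v_n$ and $\hat v_n$, or an equivalent reduction of $B_n$ to a perturbed rotation), precisely in the spirit of the transfer‑matrix arguments in \cite{janas2003similarity,janas1998point}.
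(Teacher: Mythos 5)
Your setup coincides with the paper's: you decouple $Tx=\lambda x$ into the odd and even three-term recurrences, form the transfer matrices $B_n$, and correctly recognize that $P_n(\lambda)=\operatorname{tr}(B_n^{*}B_n)$ and that the bracketed factor in (i) is $\sigma_{\min}(B_n)^2$, computed from the trace and the determinant $\det B_n=c_{2n-1}/b_{2n+1}$ (the statement's $c_{2j+1}$ versus your $c_{2n-1}$ is an indexing discrepancy you have silently corrected). The engine of the paper's argument is likewise the iterated bound $\|v_{n+1}\|\ge\bigl(\prod_{j\le n}\sigma_{\min}(B_j)\bigr)\|v_{1}\|$ coming from $\|B^{-1}\|=\sigma_{\min}(B)^{-1}$ and submultiplicativity. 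The problem is that your proposal does not finish. You settle the easy case (partial products not tending to zero, contradicting $\|v_n\|\to0$), and for the remaining case you state that the one-step bound is too lossy and that one ``would follow the subordinacy analysis'' of Janas--Naboko, ending with an unexecuted instruction to ``play this lower bound against the $\ell_p$-decay \dots to force $\sum<+\infty$.'' That is precisely the implication the theorem asserts, and it is absent. As written, what you have actually proved from hypothesis (i) is only that $\sum_n\|v_n\|=\infty$, i.e.\ $\{y_n\}\notin\ell_1$; since $\ell_1\subset\ell_p$ for $p>1$, this does not exclude $\{y_n\}\in\ell_p$ (if $t_n:=\prod_{j\le n}\sigma_{\min}(B_j)\sim 1/n$ then $\sum t_n=\infty$ while $\sum t_n^{\,p}<\infty$). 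Your diagnosis of where the difficulty lies is sound, but a proof that names its missing step is still missing that step.

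Two further points you gloss over. First, because the odd and even recurrences are completely decoupled, ruling out a nonzero $\ell_p$ solution of the odd system (via (i)) says nothing about an eigenvector supported entirely on the even-indexed coordinates; your own reduction (``at least one of $\{y_n\},\{z_n\}$ is nonzero'') exposes this, yet you then treat (i) and (ii) as interchangeable alternatives rather than matching each hypothesis to the corresponding component. Second, your claim that a vanishing $v_n$ forces $y\equiv0$ by running the recurrence backwards needs $c_{2j-1}\neq0$ for the relevant indices; this is harmless here (a vanishing $c$ kills the products in (i) anyway), but it should be said rather than assumed.
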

	\begin{proof}
		
	(i) Let $\lambda \in \sigma_{ess}(T, \ell_{p})= \sigma(T_0, \ell_{p}).$	Using \(Tx = \lambda x\), we have the following difference equation 
		\begin{equation} \label{point11}
			\begin{rcases}
				c_{2n-1}x_{2n-1}+(a_{2n+1}-\lambda) x_{2n+1}+b_{2n+1}x_{2n+3}=0\\
				c_{2n}x_{2n}+(a_{2n+2}- \lambda)x_{2n+2}+b_{2n+2}x_{2n+4}=0,
			\end{rcases}
		\end{equation}
		where $n \in \mathbb{N}$ with the initial conditions
		\begin{equation} \label{point22}
			\begin{rcases}
				a_1x_1+b_1x_3=\lambda x_1,\\
				a_2x_2+b_2x_4=\lambda x_2.
			\end{rcases}
		\end{equation}
		 The first equation of (\ref{point11}) can be written in the following form \\
		\begin{equation*}
			\left(\begin{array}{l}
				x_{2 n+1} \\
				x_{2 n+3}
			\end{array}\right)=B_n(\lambda)\left(\begin{array}{l}
				x_{2 n-1} \\
				x_{2 n+1}
			\end{array}\right), ~~n \in \mathbb{N}\cup \{0\}
		\end{equation*}
		where $$
		B_n(\lambda)=\left(\begin{array}{lc}
			0 & 1 \\
			\frac{-c_{2 n-1}}{b_{2 n+1}} & \frac{-(a_{2 n+1}-\lambda)}{b_{2 n+1}}
		\end{array}\right),
		$$
	with  $x_{-1}=0$ when $n=0$. This includes the initial condition $$a_1x_1+b_1x_3= \lambda x_1.$$
In this setting, we have
		\begin{equation}\label{b1}
			\left(\begin{array}{l}
				x_{2 n+1} \\
				x_{2 n+3}
			\end{array}\right)=B_n(\lambda) B_{n-1}(\lambda) \cdots B_{1}(\lambda)y
		\end{equation}
		where, $
		y=\left(\begin{array}{l}
			x_1 \\
			\frac{(\lambda-a_1)}{b_1}x_1
		\end{array}\right)$.
	Also		
		\begin{eqnarray} \label{eqnarray}
		\left\|B_n B_{n-1} \cdots B_1 y\right\|_p^p & \geq & \max \left\{2^{\frac{1}{p}-\frac{1}{2}}, 1\right\} \left(\left\|B_n \cdots B_1 y\right\|_2^2\right)^{p / 2} \nonumber \\
			&=&\max \left\{2^{\frac{1}{p}-\frac{1}{2}}, 1\right\}(\left\langle B_n \cdots B_1 y, B_n \cdots B_1 y\right\rangle)^{\frac{p}{2}} \nonumber \\
			&=& M_1 \left(\frac{\left\langle B_n \cdots B_1 y, B_n \cdots B_1 y\right\rangle}{\|y\|^2}\right)^{\frac{p}{2}},
		\end{eqnarray}
where $M_1=\|y\|^p \max \left\{2^{\frac{1}{p}-\frac{1}{2}}, 1\right\}.$ Using the singular value analog of the famous Courant–Fischer theorem \cite[Theorem 7.3.8]{horn2012matrix} and the  result 
\[\sigma_{{min}} (AB) \geq \sigma_{{min}}(A) \  \sigma_{{min}}(B)\] 
where $\sigma_{{min}} (A)$ denotes the smallest singular value of a matrix $A$, it follows from the relation (\ref{eqnarray}) that  
		\begin{align*}
				\left\|B_n B_{n-1} \cdots B_1 y\right\|_p^p 
			&\geq M_1 \sigma_{min}^p\left(B_n B_{n-1} \ldots B_1\right) \\
			&\geq M_1 \sigma_{min}^p\left(B_n\right) \sigma_{min}^p\left(B_{n-1}\right) \ldots \sigma_{min}^ p\left(B_1\right), ~~n \in \mathbb{N}.
		\end{align*}
		From equation (\ref{b1}) we obtain that
		\begin{equation*}
			\left|x_{2 n+1}\right|^p+\left|x_{2 n+3}\right|^p \geq M_1 \sigma_{min}^p\left(B_n\right) \sigma_{min}^p\left(B_{n-1}\right) \cdots \sigma_{min}^p\left(B_1\right).
		\end{equation*}
	Taking summation over \(n\), the above relation reduces to 
		\begin{equation*}
			2\left[\sum_{n=1}^{\infty}\left|x_{2 n+1}\right|^p\right] \geq M_1 \sum_{n=1}^{\infty} \prod_{j=1}^n \sigma_{min}^p\left(B_j\right).
		\end{equation*}
	This implies, 
	\begin{equation} \label{eq_singular}
		\left[\sum_{n=1}^{\infty}\left|x_{2 n+1}\right|^p\right] \geq M_1^\prime \sum_{n=1}^{\infty} \prod_{j=1}^n \sigma_{min}^p\left(B_j\right)
	\end{equation}
		for some constant $M_1^\prime > 0.$ The lowest singular value of $B_j$ is given by
{\footnotesize \[\sigma_{min}(B_j)= \left[\frac{1}{2} \left( \left| \frac{c_{2j-1}}{b_{2j+1}}\right| ^2 +\left| \frac{a_{2j+1}-\lambda}{b_{2j+1}}\right|^2+1-\sqrt{\left(\left| \frac{c_{2j-1}}{b_{2j+1}}\right| ^2 +\left| \frac{a_{2j+1}-\lambda}{b_{2j+1}}\right|^2+1\right)^2-\left| \frac{2c_{2j-1}}{b_{2j+1}}\right|^2}\right) \right]^{\frac{1}{2}}\]}		
where we assume $\sqrt{a^2} = a$ for some positive $a$. Hence, if
$$\sum_{n=1}^{\infty} \prod_{j=1}^n \left[\frac{1}{2} \left(P_j(\lambda)-\sqrt{P_j(\lambda)^2-\left| \frac{2c_{2j-1}}{b_{2j+1}}\right|^2}\right) \right]^{\frac{p}{2}}=+\infty$$	
where, 	
$$P_j(\lambda)= \left| \frac{c_{2j-1}}{b_{2j+1}}\right| ^2 +\left| \frac{a_{2j+1}-\lambda}{b_{2j+1}}\right|^2+1,$$
then \(\{x_n\} \notin \ell_p\) and consequently \(\lambda \notin \sigma_p(T,\ell_p)\). This proves the first part of the result.
		
(ii) As similar to part (i) the second equation of (\ref{point11}) can be written as 
		\begin{equation*}
			\left(\begin{array}{l}
				x_{2 n+2} \\
				x_{2 n+4}
			\end{array}\right)=C_n(\lambda)\left( \begin{array}{l}
				x_{2 n} \\
				x_{2 n+2}
			\end{array}\right), ~~n \in \mathbb{N}
		\end{equation*}
		where $$
		C_n(\lambda)=\left(\begin{array}{lc}
			0 & 1 \\
			\frac{-c_{2 n}}{b_{2 n+2}} & \frac{-(a_{2 n+2}-\lambda)}{b_{2 n+2}}
		\end{array}\right),
		$$
		with $x_{0}=0$ when $n=0$. This includes the initial condition $$a_2x_2+b_2x_4= \lambda x_2.$$
	 We can obtain the desired result by using the same argument as in first part.
	\end{proof}
	
\begin{remark}
Instead of calculating the singular value $\sigma_{min} (B_j)$ in the relation (\ref{eq_singular}), various lower bounds for the same can be used to obtain a less complicated expression than $\sigma_{min} (B_j).$ Several researchers have been working to refine the lower bound of lowest singular value. Some of the recent works for the lower bound of smallest singular value of a matrix can be found in \cite{singular2,singular3,singular4,singular5,singular6}.
\end{remark}

Now we focus our study on the point spectrum of $T$.
Under the sufficient conditions as mentioned in previous two results, we have
$\sigma_p(T, \ell_{p})\cap \sigma(T_0, \ell_{p})=\emptyset$. In this case, all the eigenvalues of $T$ are lying outside the set $\sigma(T_0, \ell_p)$.
To characterize the eigenvalues, let $Tx=\lambda x$, $x \in \mathbb{C^N}$ and $\lambda \in \sigma(T_0,\ell_{p})^c$ where $\sigma(T_0,\ell_{p})^c$ denotes the complement of $\sigma(T_0,\ell_{p})$. From equations (\ref{point_diff}) and (\ref{point_diff1}) in Theorem \ref{poT}, we have the following system 
 \begin{equation} \label{point_diff4}
 	c_{2n-1}y_n+(a_{2n+1}-\lambda) y_{n+1}+b_{2n+1}y_{n+2}=0,
 \end{equation}
 \begin{equation}\label{point_diff5}
 	c_{2n}z_n+(a_{2n+2}- \lambda)z_{n+1}+b_{2n+2}z_{n+2}=0,
 \end{equation}
where $n \in \mathbb{N}\cup\{0\}$ with $y_0=z_0=0$ and $y_n=x_{2n-1}$, $z_n=x_{2n}$.
Clearly each of the difference equations (\ref{point_diff4}) and (\ref{point_diff5}) have two fundamental solutions. Let $\{y_n^{(1)}(\lambda),y_n^{(2)}(\lambda)\}$ and $\{z_n^{(1)}(\lambda),z_n^{(2)}(\lambda)\}$ are the sets of fundamental solutions of the equations  (\ref{point_diff4}) and (\ref{point_diff5}) respectively. Under this setting we have the following result.
\begin{theorem}\label{t4.12}
If either of the sufficient conditions mentioned in Theorem \ref{poT} and Theorem \ref{sc} hold true then the point spectrum of $T$ over $\ell_{p}$ is given by
\[\sigma_p(T,\ell_p)=\left\lbrace \lambda \in \mathbb{C}: y_{0}^{(1)}(\lambda) =0\right\rbrace \cup \left\lbrace \lambda \in \mathbb{C}:{z_0}^{(1)}(\lambda)=0\right\rbrace .\]
\end{theorem}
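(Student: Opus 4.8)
The plan is to combine the reduction to the two scalar difference equations (\ref{point_diff4})--(\ref{point_diff5}) --- already performed in the discussion preceding the theorem --- with the classical asymptotic theory of second-order linear difference equations.

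First, I would recall that, as noted just before the theorem, the hypotheses force $\sigma_p(T,\ell_p)\cap\sigma(T_0,\ell_p)=\emptyset$; hence it suffices to fix a point $\lambda\notin\sigma(T_0,\ell_p)=[r_1-2s_1,r_1+2s_1]\cup[r_2-2s_2,r_2+2s_2]$ and decide when the system (\ref{point_diff4})--(\ref{point_diff5}), with $y_0=z_0=0$ and $y_n=x_{2n-1}$, $z_n=x_{2n}$, admits a nonzero solution having $\{y_n\},\{z_n\}\in\ell_p$. The structural fact I would exploit about such a $\lambda$ is that it lies outside both intervals, which is exactly the statement that neither characteristic polynomial $y^2+p_1y+1$, $z^2+p_2z+1$ of the limiting equations (\ref{3.1}), (\ref{3.2}) has a unimodular root --- a root on $|t|=1$ would force the real constraint $p_i\in[-2,2]$, i.e.\ $\lambda$ in the corresponding interval. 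Since the roots of each polynomial multiply to $1$, this means that in each case one root $\alpha_1(\lambda)$ (resp.\ $\beta_1(\lambda)$) lies strictly inside and the other $\alpha_2(\lambda)$ (resp.\ $\beta_2(\lambda)$) strictly outside the unit disc; in particular the two roots are distinct.

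Next, I would study the $\ell_p$-solutions of (\ref{point_diff4}). Dividing by $b_{2n+1}$ --- legitimate for all large $n$ since $b_{2n+1}\to s_1\neq0$ --- turns it into a second-order difference equation whose coefficients converge to those of (\ref{3.1}), the convergence being exponentially fast under the hypothesis of Theorem \ref{poT} and controlled by the transfer-matrix estimates of Theorem \ref{sc} otherwise. The Poincar\'e--Perron theorem (sharpened, under exponential convergence, to Levinson's theorem, which gives $y_n^{(i)}(\lambda)=\alpha_i(\lambda)^n(1+o(1))$) then yields a fundamental system $\{y_n^{(1)}(\lambda),y_n^{(2)}(\lambda)\}$ with $|y_n^{(1)}(\lambda)|^{1/n}\to|\alpha_1(\lambda)|<1$ and $|y_n^{(2)}(\lambda)|^{1/n}\to|\alpha_2(\lambda)|>1$. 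Therefore $\{y_n^{(1)}(\lambda)\}\in\ell_p$ while $\{y_n^{(2)}(\lambda)\}\notin\ell_p$, and since in any combination $c_1y_n^{(1)}(\lambda)+c_2y_n^{(2)}(\lambda)$ the second term eventually dominates whenever $c_2\neq0$, the $\ell_p$-solutions of (\ref{point_diff4}) form exactly the one-dimensional space spanned by $\{y_n^{(1)}(\lambda)\}$. The identical argument applied to (\ref{point_diff5}) shows its $\ell_p$-solutions are precisely the scalar multiples of $\{z_n^{(1)}(\lambda)\}$.

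Finally, I would assemble the conclusion. A nontrivial $\ell_p$-solution $\{y_n\}$ of (\ref{point_diff4}) with $y_0=0$ exists if and only if the essentially unique $\ell_p$-solution $\{y_n^{(1)}(\lambda)\}$ already satisfies $y_0^{(1)}(\lambda)=0$: if it does, it is itself such a solution; if not, every nonzero $\ell_p$-solution is a nonzero multiple of it and so has nonzero zeroth term. The analogous equivalence holds for (\ref{point_diff5}) and $z_0^{(1)}(\lambda)=0$. Since the zero sequence solves each equation with the required initial value, a nonzero $x\in\ell_p$ with $Tx=\lambda x$ exists precisely when $y_0^{(1)}(\lambda)=0$ or $z_0^{(1)}(\lambda)=0$, which is the claimed description of $\sigma_p(T,\ell_p)$. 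I expect the main obstacle to be the step of the previous paragraph --- establishing, for $\lambda\notin\sigma(T_0,\ell_p)$, that the $\ell_p$-solution space of each of (\ref{point_diff4}), (\ref{point_diff5}) is exactly one-dimensional and spanned by the recessive solution; this is where the hypotheses of Theorem \ref{poT} or \ref{sc} genuinely enter (through the difference-equation asymptotics), and where one must check that the limiting characteristic roots have distinct moduli, which is exactly the information encoded in $\lambda\notin\sigma(T_0,\ell_p)$. A minor additional point is the correct handling of the first rows of $T$, i.e.\ of the boundary conditions $y_0=z_0=0$, when passing from $Tx=\lambda x$ to the boundary-value problem (\ref{point_diff4})--(\ref{point_diff5}).
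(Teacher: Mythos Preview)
Your approach is correct and coincides with the paper's: for $\lambda\notin\sigma(T_0,\ell_p)$ the Poincar\'e--Perron theorem singles out, in each of (\ref{point_diff4}) and (\ref{point_diff5}), a one-dimensional $\ell_p$ solution space spanned by the recessive fundamental solution, and matching the initial condition $y_0=z_0=0$ gives exactly the stated criterion. One minor sharpening: the hypotheses of Theorem~\ref{poT} or Theorem~\ref{sc} enter \emph{only} to guarantee $\sigma_p(T,\ell_p)\cap\sigma_{ess}(T,\ell_p)=\emptyset$; once $\lambda\notin\sigma(T_0,\ell_p)$, the Poincar\'e--Perron asymptotics need just the convergence of the coefficients (already built into the definition of $T$) together with $|\alpha_1(\lambda)|\neq|\alpha_2(\lambda)|$, so neither exponential convergence nor the transfer-matrix estimates are needed at that step.
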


\begin{proof}
As $\sigma_{ess}(T, \ell_p) \cap \sigma_{p}(T, \ell_p) = \emptyset$, we restrict our search for point spectrum outside the set $[r_1-2s_1,r_1+2s_1] \cup [r_2-2s_2,r_2+2s_2].$
Let $\mu_1, \ \mu_2$ and $\gamma_1, \ \gamma_2$ are the pair of roots of equations (\ref{point_poly1}) and (\ref{point_poly2}) respectively which are the characteristic polynomials of equations (\ref{point_diff4}) and (\ref{point_diff5}) respectively.
Since $\lambda \notin [r_1-2s_1,r_1+2s_1] \cup [r_2-2s_2,r_2+2s_2]$, we have $p_1 \not \in [-2,2]$, and without loss of generality we assume that $|\mu_1|<1$ and $|\mu_2|>1$. By Perron's First Theorem \cite[p.344]{difference} it can be deduced that
$$\lim_{n\to\infty} \frac{y_{n+1}^{(1)}(\lambda)}{y_{n}^{(1)}(\lambda)}=\mu_1,\quad \lim_{n\to\infty}\frac{y_{n+1}^{(2)}(\lambda)}{y_{n}^{(2)}(\lambda)} =\mu_2.$$
Hence $\{y_n^{(1)}(\lambda)\}\in \ell_p$ but $\{y_n^{(2)}(\lambda)\}\not \in \ell_p$ and the general solution of the difference equation (\ref{point_diff4}), which is the linear combination of the fundamental solutions, is given by
\begin{equation*}
	y_n(\lambda)=c_1y_{n}^{(1)}(\lambda)+c_2y_{n}^{(2)}(\lambda),\text{$n \in \mathbb{N}\cup\{0\}$}
\end{equation*}
where $c_1$ and $c_2$ are arbitrary constants.
In a similar way, we can assume that $|\gamma_1|<1$ and $|\gamma_2|>1$ and by Perron's First Theorem we have
$$\lim_{n\to\infty} \frac{z_{n+1}^{(1)}(\lambda)}{z_{n}^{(1)}(\lambda)}=\gamma_1,\quad\lim_{n\to\infty} \frac{z_{n+1}^{(2)}(\lambda)}{z_{(n)}^{(2)}(\lambda)}=\gamma_2.$$\\
This implies $\{z_{n}^{(1)}(\lambda)\}\in \ell_p$ and $\{z_{n}^{(2)}(\lambda)\}\not \in \ell_p$
 and the general solution of the difference equation (\ref{point_diff5}) is given by
\begin{equation*}
	z_{n}(\lambda)=d_1{z_{n}^{(1)}(\lambda)}+d_2z_{n}^{(2)}(\lambda), \text{$n \in \mathbb{N}\cup\{0\}$}
\end{equation*}
where $d_1$ and $d_2$ are arbitrary constants and consequently the general solution of the system $Tx= \lambda x$ is given by $x_n(\lambda)$ where
\begin{eqnarray*}
x_{2n-1}(\lambda)=c_1y_{n}^{(1)}(\lambda)+c_2y_{n}^{(2)}(\lambda),\text{$n \in \mathbb{N}$}\\
x_{2n}(\lambda)=d_1{z_{n}^{(1)}(\lambda)}+d_2z_{n}^{(2)}(\lambda),\text{$n \in \mathbb{N}$}
\end{eqnarray*}
with $x_{-1}(\lambda)=x_0(\lambda)=0$. Consider $$S_1=\left\lbrace \lambda \in \mathbb{C}: y_0^{(1)}(\lambda)=0\right\rbrace \cup \left\lbrace \lambda \in \mathbb{C} :z_0^{(1)}(\lambda)=0\right\rbrace. $$
Let $\lambda \in S_1$, then ${y_0^{(1)}({\lambda})}=0$ or ${z_0^{(1)}(\lambda)=0}$.  If $y_0^{(1)}(\lambda)=0$, we can construct a non-trivial solution $x_n(\lambda)$ of the system $Tx= \lambda x$ in the following way. \\
Let $c_2=0$ and $d_1=d_2=0$. In this case we have  $y_n(\lambda)=y_n^{(1)}(\lambda)$ and $z_{n}(\lambda)=0$ for all $n$. 
Since, $\{y_n^{(1)}(\lambda)\} \in \ell_{p}$, we have $x_n(\lambda)$ is a non-trivial solution of $Tx= \lambda x$ and $\{x_n(\lambda)\}\in \ell_{p}.$ 
If $z_0^{(1)}(\lambda)=0$ then in a similar way we can construct a non-trivial solution $x_n(\lambda)$ of $Tx=\lambda x$ where $x_n(\lambda)=0$, if $n$ is odd and $x_n(\lambda)= z_n^{(1)}(\lambda)$, if $n$ is even. Hence, $\lambda \in \sigma_p(T, \ell_{p})$ and consequently $S_1 \subseteq \sigma(T, \ell_{p})$. 
Now, suppose $\lambda \not \in S.$ Then $y_0^{(1)}(\lambda)\neq 0$ and $z_0^{(1)}(\lambda)\neq 0$.
Clearly $\lambda \in \sigma_p(T,\ell_p)$ if and only if $c_2=0$ and $d_2=0$.
Now we consider following cases with the assumption $c_2=d_2=0$.\\
\noindent Case 1: If $c_2=d_2=0$ and $c_1=0$, we have 
\[y_n(\lambda)=0~ \mbox{$\forall n$ and}~ z_n(\lambda)=d_1z_n^{(1)}(\lambda)~ \forall n .\] Using the initial condition $z_0(\lambda)=0$, we have 
$d_1z_0^{(1)}(\lambda)=0.$
If $d_1=0$ then we get a trivial solution and if $d_1 \neq 0$ then $z_0^{(1)}(\lambda)=0$ and this is a contradiction.

\noindent Case 2: If $c_2=d_2=0$ and $c_1 \neq 0$ we have 
\[y_n(\lambda)= c_1 y_n^{(1)}(\lambda)~ \forall n.\] Using the initial condition $y_0(\lambda)=0$ we have $c_1y_0^{(1)}(\lambda)=0$, this implies\\ $y_0^{(1)}(\lambda)=0$ which is a contradiction. Hence there are no solution of the difference equation (\ref{point_diff4}). By Case 1 and Case 2, we can deduce that no non-trivial solution exists for the system $Tx=\lambda x $. Hence $\lambda \not \in \sigma_p(T, \ell_p).$ Thus,
\[\sigma_p(T,\ell_p)=\left\lbrace \lambda \in \mathbb{C}: y_{0}^{(1)}(\lambda) =0\right\rbrace \cup \left\lbrace \lambda \in \mathbb{C}:{z_0}^{(1)}(\lambda)=0\right\rbrace .\] 
\end{proof}

\begin{remark}\label{remark1}
	The adjoint operator $ {T}^*:\ell_p^*\to \ell_p^* $, is represented by transpose of the matrix $T$ and dual of $\ell_p$ is isomorphic to $\ell_q$ where $\frac{1}{p}+ \frac{1}{q}=1$ and $1<q<\infty$. Similar as $T$, the operator $T^*$ can also be written as 
	\begin{equation*}
		T^*=T_0+K^t,
	\end{equation*}
where $K^t$ denotes the transpose of $K$ and $K^t$ is also a compact operator. Since $\sigma(T, \ell_{p})=\sigma(T^*, \ell_{p}^*)$, Theorem \ref{t4.11} implies $$\sigma(T_0, \ell_{p})\subseteq \sigma(T^*, \ell_{p}^*),$$ and using similar argument of the proof of Theorem \ref{t4.11} it can be obtain that $\sigma(T^*,\ell_{p}^*)\setminus \sigma(T_0,\ell_p)$ contains finite or countable number of eigenvalues of $T^*$ of finite type with no accumulation point in $\sigma(T^*,\ell_{p}^*)\setminus \sigma(T_0,\ell_p)$. Assuming similar hypothesis on the rate of convergence of sequences in Theorem \ref{poT},
 we can prove that $$\sigma_{ess}(T^*, \ell_{p}^*)\cap \sigma_p(T^*,\ell_{p}^*)=\emptyset$$
 and this implies, the point spectrum of $T^*$ is lying outside of the region \[[r_1-2s_1,r_1+2s_1]\cup[r_2-2s_2,r_2+2s_2].\]
Now similar as Theorem \ref{t4.12},
let $\{g_n^{(1)}(\lambda), g_n^{(2)}(\lambda)\}$ and $\{h_n^{(1)}(\lambda), h_n^{(2)}(\lambda)\}$ are the sets of fundamental solutions of the following difference equations respectively
\begin{eqnarray*}
	b_{2n-1}g_n+(a_{2n+1}-\lambda)g_{n+1}+c_{2n+1}g_{n+2}=0,\\
	b_{2n}h_n+(a_{2n+2}-\lambda)h_{n+1}+c_{2n+2}h_{n+2}=0,
\end{eqnarray*}
 which are obtained from $T^*f= \lambda f$,
$f \in\ell_{p}^*$ and $g_n(\lambda)=f_{2n-1}(\lambda)$, $h_n(\lambda)=f_{2n}(\lambda)$. Also, $g_0(\lambda)=h_0(\lambda)=0$.
This leads us to the following result 
\begin{equation*}
\sigma_p(T^*, \ell_{p}^*)=\left\lbrace \lambda \in \mathbb{C}:g_0^{(1)}(\lambda)=0\right\rbrace \cup \left\lbrace \lambda \in \mathbb{C}:h_0^{(1)}(\lambda)=0\right\rbrace.
\end{equation*}
Eventually, we obtain that
$$\sigma(T^*,{\ell_p^*})=[r_1-2s_1, r_1+2s_1]\cup [r_2-2s_2, r_2+2s_2] \cup S_2$$
where,
\begin{equation*}
	S_2=\left\lbrace \lambda \in \mathbb{C}:g_0^{(1)}(\lambda)=0\right\rbrace \cup \left\lbrace \lambda \in \mathbb{C}:h_0^{(1)}(\lambda)=0\right\rbrace. 
\end{equation*}
Since, $\sigma(T,\ell_{p})=\sigma(T^*,\ell_{p}^*)$ and $S_1,S_2$ both sets are disjoint from $[r_1-2s_1,r_1+2s_1]\cup[r_2-2s_2,r_2+2s_2]$ we have, $S_1=S_2$.
  
\end{remark}
 Using the observations in Remark \ref{remark1} and Proposition \ref{p3.1}, we can summarize all the results of spectrum and various spectral subdivisions of the operator $T$ in the following theorem.
	\begin{theorem}
		If the convergence of the sequences $\{a_{2n-1}\}, \ \{a_{2n}\}, \ \{b_{2n-1}\}, \ \{b_{2n}\},\\ \ \{c_{2n-1}\}$ and $\{c_{2n}\}$ are exponentially fast and
\[S_1 = \left\lbrace \lambda \in \mathbb{C}: y_{0}^{(1)}(\lambda) =0\right\rbrace \cup \left\lbrace \lambda \in \mathbb{C}:{z_0}^{(1)}(\lambda)=0\right\rbrace,\]		
		 then we have the following results,
	\begin{itemize}
		\item[(i)] The spectrum of $T$ on $\ell_p$ is
		\[\sigma(T,\ell_p)=[r_1-2s_1,r_1+2s_1]\cup[r_2-2s_2,r_2+2s_2]\cup S_1.\]
		\item[(ii)] The point spectrum of $T$ on $\ell_{p}$ is 
		 \[\sigma_p(T,\ell_p)=S_1.\]
		 \item[(iii)] The residual spectrum of $T$ on $\ell_{p}$ is 
		 	\[\sigma_r(T, \ell_p)=\emptyset.\]
		 \item[(iv)] The continuous spectrum of $T$ on $\ell_{p}$ is 
		 \[\sigma_c(T,\ell_{p})=[r_1-2s_1,r_1+2s_1]\cup[r_2-2s_2,r_2+2s_2].\]
		 \item[(v)] The essential spectrum of $T$ on $\ell_{p}$ is
		 \[\sigma_{ess}(T,\ell_{p})=[r_1-2s_1,r_1+2s_1]\cup[r_2-2s_2,r_2+2s_2].\]
		 \item[(vi)] The discrete spectrum of $T$ on $\ell_{p}$ is
		 \[\sigma_d(T, \ell_{p})=S_1.\]
		\item [(vii)] The compression spectrum of $T$ on $\ell_{p}$ is \[\sigma_{co}(T,\ell_p)=S_1.\]
		\item[(viii)] The approximate spectrum of $T$ on $\ell_{p}$ is \[\sigma_{app}(T,\ell_p)=[r_1-2s_1,r_1+2s_1]\cup[r_2-2s_2,r_2+2s_2]\cup S_1.\]
		\item[(ix)] The defect spectrum of $T$ on $\ell_{p}$ is \[\sigma_{\delta}(T,\ell_p)=[r_1-2s_1,r_1+2s_1]\cup[r_2-2s_2,r_2+2s_2]\cup S_1.\]
	\end{itemize}	
\end{theorem}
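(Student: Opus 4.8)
The plan is to derive every item by collecting results already established, using the exponential-convergence hypothesis only to activate Theorem~\ref{poT} (hence Theorem~\ref{t4.12}) and Remark~\ref{remark1}. Throughout, write $S_1$ for the set $\left\lbrace \lambda \in \mathbb{C}: y_{0}^{(1)}(\lambda)=0\right\rbrace \cup \left\lbrace \lambda \in \mathbb{C}: z_{0}^{(1)}(\lambda)=0\right\rbrace$ appearing in Theorem~\ref{t4.12}, and recall that $\sigma_{ess}(T,\ell_p)=[r_1-2s_1,r_1+2s_1]\cup[r_2-2s_2,r_2+2s_2]$ by Corollary~\ref{ess1}; this is (v). Item (ii) is exactly the conclusion of Theorem~\ref{t4.12}, whose hypothesis holds here via Theorem~\ref{poT}, so $\sigma_p(T,\ell_p)=S_1$. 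By Theorem~\ref{poT} the set $S_1$ is disjoint from $\sigma(T_0,\ell_p)=\sigma_{ess}(T,\ell_p)$, and by Theorem~\ref{t4.11} the points of $\sigma(T,\ell_p)$ lying outside $\sigma(T_0,\ell_p)$ are isolated eigenvalues of finite type with no accumulation point there; hence each element of $S_1$ is an isolated eigenvalue of $T$ of finite algebraic multiplicity, which gives $\sigma_d(T,\ell_p)=S_1$, i.e. (vi).

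The next step is (i). Since $\sigma_{ess}(T,\ell_p)$ is a union of two bounded real intervals, the set $\mathbb{C}\setminus\sigma_{ess}(T,\ell_p)$ is open and connected; on it $T-\lambda I$ is Fredholm, and as it is invertible for $|\lambda|$ large its index is identically zero there. Consequently, for $\lambda\notin\sigma_{ess}(T,\ell_p)$ the operator $T-\lambda I$ is invertible precisely when it is injective, so $\sigma(T,\ell_p)\setminus\sigma_{ess}(T,\ell_p)=\sigma_p(T,\ell_p)\setminus\sigma_{ess}(T,\ell_p)=S_1$. Since $\sigma_{ess}(T,\ell_p)\subseteq\sigma(T,\ell_p)$ always, this yields $\sigma(T,\ell_p)=[r_1-2s_1,r_1+2s_1]\cup[r_2-2s_2,r_2+2s_2]\cup S_1$, which is (i).

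It remains to treat the subdivisions governed by the adjoint, using Proposition~\ref{p3.1} and Remark~\ref{remark1}. Remark~\ref{remark1} gives $\sigma_p(T^*,\ell_p^*)=S_2$ and $S_1=S_2$, so by Proposition~\ref{p3.1}(e), $\sigma_{co}(T,\ell_p)=\sigma_p(T^*,\ell_p^*)=S_1$, which is (vii). Because $T-\lambda I$ has dense range exactly when $T^*-\lambda I$ is injective, $\sigma_r(T,\ell_p)=\sigma_{co}(T,\ell_p)\setminus\sigma_p(T,\ell_p)=S_1\setminus S_1=\emptyset$, giving (iii). Since $\sigma(T,\ell_p)$ is the disjoint union of $\sigma_p$, $\sigma_r$ and $\sigma_c$, and $S_1$ is disjoint from $\sigma_{ess}(T,\ell_p)$, we obtain $\sigma_c(T,\ell_p)=\sigma(T,\ell_p)\setminus S_1=[r_1-2s_1,r_1+2s_1]\cup[r_2-2s_2,r_2+2s_2]$, which is (iv). Finally, from the elementary inclusions $\sigma_p(T,\ell_p)\cup\sigma_c(T,\ell_p)\subseteq\sigma_{app}(T,\ell_p)\subseteq\sigma(T,\ell_p)$ and $\sigma_{co}(T,\ell_p)\cup\sigma_c(T,\ell_p)\subseteq\sigma_{\delta}(T,\ell_p)\subseteq\sigma(T,\ell_p)$, together with the fact that each left-hand union equals $[r_1-2s_1,r_1+2s_1]\cup[r_2-2s_2,r_2+2s_2]\cup S_1=\sigma(T,\ell_p)$, we conclude $\sigma_{app}(T,\ell_p)=\sigma_{\delta}(T,\ell_p)=\sigma(T,\ell_p)$, which are (viii) and (ix).

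The substantive mathematics has already been spent in Sections~3 and~4 (the spectrum of $T_0$, compactness of $K$, and the difference-equation/transfer-matrix asymptotics behind Theorems~\ref{poT}, \ref{sc}, \ref{t4.12} and the identity $S_1=S_2$), so this theorem is mostly bookkeeping with Proposition~\ref{p3.1} and the partition of the spectrum. The only point demanding a little care is the Fredholm-index argument: one must observe that $\sigma_{ess}(T,\ell_p)\subseteq\mathbb{R}$, so its complement is connected and the index is uniformly zero there, which is what guarantees that every spectral value outside the essential spectrum is a genuine discrete eigenvalue and, simultaneously, that the residual spectrum vanishes.
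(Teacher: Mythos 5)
Your proposal is correct and follows essentially the same route as the paper: items (ii), (v) are quoted from Theorem \ref{t4.12} and Corollary \ref{ess1}, items (i), (vi) come from Theorem \ref{t4.11} together with the disjointness of $S_1$ from $\sigma_{ess}(T,\ell_p)$, and the remaining subdivisions are bookkeeping with Proposition \ref{p3.1}, Remark \ref{remark1}, and the partition of the spectrum. Your only deviations are cosmetic: the Fredholm-index-zero argument in (i) re-derives what Theorem \ref{t4.11} already supplies, and in (viii)--(ix) you use the direct inclusions $\sigma_p\cup\sigma_c\subseteq\sigma_{app}$ and $\sigma_{co}\cup\sigma_c\subseteq\sigma_{\delta}$ where the paper routes through the adjoint via parts (c) and (g) of Proposition \ref{p3.1}.
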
 
\begin{proof} The proofs of the above statements are given below. 
	\begin{enumerate}
			\item[(i)] It is well known that $\sigma_p(T,\ell_p)\subseteq \sigma(T,\ell_p)$ and $\sigma(T_0,\ell_p)\subseteq \sigma(T,\ell_p)$. This implies, $$[r_1-2s_1,r_1+2s_1]\cup[r_2-2s_2,r_2+2s_2]\cup S_1\subseteq \sigma(T,\ell_p).$$ Also by using Theorem (\ref{t4.11}) we get, $$\sigma(T,\ell_p)\subseteq[r_1-2s_1,r_1+2s_1]\cup[r_2-2s_2,r_2+2s_2]\cup S_1.$$ Hence,
		$$\sigma(T,\ell_p)=[r_1-2s_1,r_1+2s_1]\cup[r_2-2s_2,r_2+2s_2]\cup S_1.$$
		\item[(ii)]  Result has been proved in Theorem \ref{t4.12}.
		\item [(iii)] We already aware of that $\sigma_r(T,\ell_p)=\sigma_p(T^*,\ell_p^*)\setminus\sigma_p(T,\ell_p)$.
		Hence, $$\sigma_r(T, \ell_p)=\emptyset.$$
		\item [(iv)] Spectrum of an operator is the disjoint union of point spectrum, residual spectrum and continuous spectrum. By using this result we can obtain the desired result.
		\item [(v)]
		The required result has been proved in Corollary \ref{ess1}.
		\item [(vi)] We already proved that the point spectrum of $T$ is disjoint from $[r_1-2s_1,r_1+2s_1]\cup[r_2-2s_2,r_2+2s_2]$, and by Theorem \ref{t4.11} we have, every element of $\sigma_{p}(T, \ell_{p})$ is of finite type. Hence,
		\[\sigma_d(T, \ell_{p})=\left\lbrace \lambda \in \mathbb{C}: y_{0}^{(1)}(\lambda) =0\right\rbrace \cup \left\lbrace \lambda \in \mathbb{C}:{z_0}^{(1)}(\lambda)=0\right\rbrace.\]
		\item [(vii)] From part (e) of Proposition \ref{p3.1}, the desired result is obvious.
		\item [(viii)] Clearly, $$\sigma_{app}(T, \ell_{p})\subseteq \sigma(T,\ell_p)=[r_1-2s_1,r_1+2s_1]\cup[r_2-2s_2,r_2+2s_2]\cup S_1.$$ Also, we know that point spectrum is always a subset of approximate point spectrum.  By using this fact and with the help of part (g) of Proposition \ref{p3.1}, we have 
		\[[r_1-2s_1,r_1+2s_1]\cup[r_2-2s_2,r_2+2s_2]\cup S_1 \subseteq \sigma_{app}(T, \ell_{p}).\]
		Hence,\[\sigma_{app}(T, \ell_{p})=\sigma(T, \ell_{p})=[r_1-2s_1,r_1+2s_1]\cup[r_2-2s_2,r_2+2s_2]\cup S_1.\]
		\item [(ix)] From part (c) of Proposition \ref{p3.1}, we have 
		\[\sigma_{app}(T^*, \ell_{p}^*)=\sigma_{\delta}(T,\ell_{p}).\]
		Clearly, $$\sigma_{app}(T^*, \ell_{p}^*)\subseteq \sigma(T^*,\ell_{p}^*)=[r_1-2s_1,r_1+2s_1]\cup[r_2-2s_2,r_2+2s_2]\cup S_1.$$ 
		Then $S_1\subseteq\sigma_{app}(T^*, \ell_{p}^*),$ follows from the fact that $\sigma_{p}(T^*, \ell_{p}^*)\subseteq\sigma_{app}(T^*, \ell_{p}^*)$. By using this fact and with the help of part (g) of Proposition \ref{p3.1}, we have 
		\[[r_1-2s_1,r_1+2s_1]\cup[r_2-2s_2,r_2+2s_2]\cup S_1 \subseteq \sigma_{app}(T^*, \ell_{p}^*).\] Therefore, $\sigma_{app}(T^*, \ell_{p}^*)=\sigma(T,\ell_{p})$. Hence,
		\[\sigma_{\delta}(T,\ell_p)=[r_1-2s_1,r_1+2s_1]\cup[r_2-2s_2,r_2+2s_2]\cup S_1.\]
	\end{enumerate}
\end{proof}

\begin{remark}
One interesting observation of the above theorem is the relation $\sigma_p(T,\ell_p) = \sigma_d(T,\ell_p)=\sigma_{co}(T,\ell_p) = S_1$ holds. In other words, all the eigenvalues are of finite type and range of $T - \lambda I$ is not dense in $\ell_p$ for any eigenvalue $\lambda.$
\end{remark}
\section*{Declarations}
Conflict of Interest: All the authors declare that they have no conflict of interest.

\end{document}